\patchcmd\Gread@eps{\@inputcheck#1 }{\@inputcheck"#1"\relax}{}{}
\theoremstyle{plain}\newtheorem{theorem}{Theorem}[section]\newtheorem{Theorem}{Theorem}\newtheorem{proposition}[theorem]{Proposition}\newtheorem{lemma}[theorem]{Lemma}\newtheorem{corollary}[theorem]{Corollary}
\theoremstyle{definition}\newtheorem{remark}[theorem]{Remark}%\newtheorem{construction}[subsection]{Construction}
\def\Q{\mathbb{Q}}\def\C{\mathbb{C}}\def\Z{\mathbb{Z}}\def\R{\mathbb{R}}\def\ZZ2{\mathbb{\Z/ 2\Z}}
\def\sb{\subset}\def\ot{\otimes}\def\t{\times}\def\sm{\setminus}
\def\a{\alpha}\def\b{\beta}\def\e{\epsilon}\def\s{\sigma}\def\De{\Delta}\def\la{\lambda}\def\la{\lambda}\def\p{\partial}
\def\wt{\widetilde}
\def\sl2{\mathfrak{sl}_2}
\def\su2{\mathfrak{su}(2)}
\def\id{\text{id}}
\def\deg{\text{deg}}
\def\UHq{U^H_q(\sl2)}\def\Usl2{U_q(\sl2)}\def\usl2{\wt{U}_q(\sl2)}\def\uq{\mathfrak{u}_q(\sl2)}
\def\Rep{\text{Rep}}
\def\CC{\mathcal{C}}
\def\ZZ{\mathcal{Z}}
\def\CC{\mathcal{C}}
\def\HH{H=\{\Ha\}_{\a\in G}}
\def\Z{\mathbb{Z}}
\def\C{\mathbb{C}}
\def\ot{\otimes}\def\id{\text{id}}
\def\sl{\mathfrak{sl}}
\def\TT{\overline{\Theta}}
\def\qa{q}
\def\CC{\mathcal{C}}
\def\rev{\overrightarrow{\text{ev}}}
\def\rcoev{\overrightarrow{\text{coev}}}
\def\lev{\overleftarrow{\text{ev}}}
\def\sl{\mathfrak{sl}_2}
\def\uq{\mathfrak{u}_q(\sl)}
\def\HH{\mathcal{H}}
\def\ADO{N}
\def\SS{S}
\title{Non-semisimple $\sl$ quantum invariants of fibred links}
\author{Daniel López Neumann and Roland van der Veen}
\email{dlopezne@indiana.edu, r.i.van.der.veen@rug.nl}
\begin{document}

\maketitle

\begin{comment}

We prove that the Akutsu-Deguchi-Ohtsuki (ADO) invariant of a fibred link in $S^3$ has maximal degree allowed by its genus bound and that its top coefficient is a power of $q=e^{\frac{\pi i}{p}}$ determined by the Hopf invariant of the plane field of $S^3$ associated to the fiber surface. Our proof is based on the genus bounds for ADO invariants established in our previous work together with a theorem of Giroux-Goodman stating that fiber surfaces in the three-sphere can be obtained from a disk by plumbing/deplumbing Hopf bands

\end{comment}

\begin{abstract} 

 The Akutsu-Deguchi-Ohtsuki (ADO) invariants are the most studied quantum link invariants coming from a non-semisimple tensor category. We show that, for fibered links in $S^3$, the degree of the ADO invariant is determined by the genus and the top coefficient is a root of unity. More precisely, we prove that the top coefficient is determined by the Hopf invariant of the plane field of $S^3$ associated to the fiber surface. Our proof is based on the genus bounds established in our previous work, together with a theorem of Giroux-Goodman stating that fiber surfaces in the three-sphere can be obtained from a disk by plumbing/deplumbing Hopf bands. \end{abstract}

\section{Introduction}

A big open problem in quantum topology is to find connections between quantum invariants and geometric topology. This problem has been mostly studied for quantum invariants coming from semisimple tensor categories, such as quantum groups at generic parameter. For the colored Jones polynomials (which come from $U_q(\sl)$ at generic $q$), there are several conjectures in this direction \cite{Kashaev:hyperbolic, Garoufalidis:character, Witten:quantum}. For other tensor categories this problem is largely unexplored.
\medskip

%Say something lie ``but few general results \cite{Ohtsuki:2loop, Eisermann:ribbon}". Appropriate references?

In our previous work \cite{LNV:genus, LNV:genus-unrolled}, we showed that quantum link invariants coming from certain non-semisimple tensor categories 
structurally differ from their semisimple counterparts. %DIFFERENT FROM WHAT? From the generic q invariants, with respect to the above question.
For instance, if $N_p(L,x)$ is the Akutsu-Deguchi-Ohtsuki (ADO) invariant of a link $L\sb S^3$ \cite{ADO, Murakami:ADO} (which comes from $U_q(\sl)$ at a $2p$-th root of unity $q$), we showed that:
\begin{align}
    \label{eq: genus bound for ADO}
    \deg_x \ADO_p(L,x)\leq (2g(L)+s-1)(p-1)
\end{align}
where $g(L)$ is the minimal genus of a connected Seifert surface of $L$ and $s$ is the number of components of $L$. This generalizes the well-known genus bound for the Alexander polynomial, which is the case $p=2$ by \cite{Murakami:Alexander}. No similar result is known for the colored Jones polynomials. Note that the ADO invariants are at the source of various recent developments in quantum topology, such as non-semisimple TQFTs \cite{BCGP}, the $q$-series invariants predicted by physicists \cite{GM:two-variable, Gukov:Coulomb} and vertex operator algebras \cite{CDGG:QFT}. 
\medskip

%For an integer $p\geq 2$, the one-variable Akutsu-Deguchi-Ohtsuki (ADO) invariant of a link $L\sb S^3$ is an isotopy invariant $\ADO_p(L)\in \Z[e^{\frac{\pi i}{p}}][x^{\pm \frac{1}{2}}]$ defined through the representation theory of the quantum group $U_q(\sl)$ at the $2p$-th root of unity $q=e^{\frac{\pi i}{p}}$ \cite{ADO, Murakami:ADO}. %More precisely, it is defined by applying the Reshetikhin-Turaev construction \cite{RT1} to a $p$-dimensional $U_q(\sl)$-module with arbitrary highest weight $\la\in \C$ and letting $x=q^{2\la+2}$. %These invariants are at the source of various recent developments in quantum topology, such as non-semisimple TQFTs \cite{BCGP}, $q$-series invariants of knots and 3-manifolds \cite{GM:two-variable, Gukov:Coulomb} and vertex operator algebras \cite{CDGG:QFT}. The ADO invariants are often referred as ``non-semisimple" quantum invariants, since the tensor categories involved are non-semisimple.

In this paper, we investigate the ADO invariants for fibred links. Recall that a link $L\sb S^3$ is {\em fibred} if there exists a fibration $S^3\sm L\to S^1$ in which each fiber is a Seifert surface. Generalizing the well known result that the Alexander polynomial of a fibered knot is monic and the genus bound is sharp we prove the following:
\def\rk{\text{rk}}

\medskip
\begin{Theorem}
\label{Thm: main fibred}
Let $L\sb S^3$ be a fibred $s$-component link with fiber surface $F$. Then, for any $p\geq 2$, $\ADO_p(L,x)$ has degree $(2g(L)+s-1)(p-1)$ and its top coefficient is given by $$(-q)^{2g(L)+s-1-2\la(\xi_F)}$$ where $\la(\xi_F)$ is the Hopf invariant of the plane field $\xi_F$ of $S^3$ associated to $F$.

\end{Theorem}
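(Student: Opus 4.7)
The plan is an induction on the number of Hopf band plumbings and deplumbings needed to obtain the fibre surface $F$ from a disk, as provided by the Giroux--Goodman theorem. Since the genus bound \eqref{eq: genus bound for ADO} from \cite{LNV:genus} already gives $\deg_x \ADO_p(L,x)\leq (2g(L)+s-1)(p-1)$, the entire content of the theorem lies in identifying the leading coefficient: once this is shown to be a specific nonzero power of $-q$, the degree must equal the bound.

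The base case is $F=D^{2}$, where $L$ is the unknot with $g=0$, $s=1$. One verifies from the definition that $\ADO_p$ of the unknot is a constant in $x$, and with the normalisation of \cite{LNV:genus} this constant is precisely $(-q)^{-2\la(\xi_{D^2})}$, where $\la(\xi_{D^2})$ is the Hopf invariant of the standard tight plane field on $S^3$ bounded by the disk, a well known rational number.

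For the inductive step, suppose $F'$ is obtained from $F$ by plumbing a Hopf band of sign $\epsilon=\pm$. Euler characteristic gives $\chi(F')=\chi(F)-1$, so the predicted degree grows by $p-1$. Classical contact-topological computations of Gompf and others show that $\la(\xi_{F'})-\la(\xi_{F})=\delta_{\epsilon}$ for an explicit constant $\delta_\epsilon$ depending only on the sign. The induction therefore reduces to a \emph{plumbing formula} for the ADO invariant: under a Hopf band plumbing of sign $\epsilon$, the leading coefficient of $\ADO_p(\p F,x)$ is multiplied by exactly $(-q)^{1-2\delta_{\epsilon}}$, and in particular does not vanish.

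The main obstacle is proving this plumbing formula. My approach would be to present $L'=\p F'$ as the Murasugi sum of $L$ with a Hopf link along the cocore of the new Hopf band, so that both $\ADO_p(L,x)$ and $\ADO_p(L',x)$ admit a common description via the modified Reshetikhin--Turaev construction underlying ADO, differing only by an insertion of a small Hopf-band tangle decorated with $\Uq$-modules. The contribution to the top degree localises on the highest weight subspace and reduces to a direct evaluation of this tangle, yielding a pure power of $q$ that must be matched with $(-q)^{1-2\delta_\epsilon}$. The most delicate point is to show that this leading factor is nonzero, so that the top coefficient of $\ADO_p(L',x)$ really does not degenerate and the deplumbing case can be treated by running the identity in reverse.
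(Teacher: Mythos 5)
Your plan coincides with the paper's at the structural level: reduce to the Giroux--Goodman theorem, prove a plumbing formula relating the leading coefficient of $\ADO_p(\p F')$ to that of $\ADO_p(\p F)$ by a pure power of $-q$, and then induct starting from the disk. The identification $\delta_{+}=0$, $\delta_{-}=1$, so that the multiplier is $(-q)^{\pm 1}$, matches the top coefficients of $\ADO_p(H_{\pm},x)$ in the paper. Two small inaccuracies: the Hopf invariant $\la(\xi_{D^2})$ is not ``a well-known rational number'' but exactly $0$ by construction (it is the reference plane field in the definition $\la(\xi_F)=H(f_{\xi_F})-H(f_{\xi_0})$); and the base case really just needs $\ADO_p(\text{unknot})=1$.

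The genuine gap is the plumbing formula itself, which you acknowledge is ``the main obstacle'' but only sketch. The paper's actual proof has two non-trivial inputs that your sketch does not supply. First, one must compute the \emph{top degree part} of the ribbon twist $\theta_Y$ on $Y=V_\la^*\ot V_\la$ and show it equals $\ADO_p(H_-)\cdot(\rcoev_V\circ\lev_V)$ plus an error $B$ whose matrix coefficients in the standard basis have strictly smaller degree in $q^{\la}$ (Proposition \ref{prop: top degree part of twist (or double braiding)} in the paper, proved via the explicit action of $E^{p-1},F^{p-1}$ on $V_\la^*\ot V_\la$). Your description — that the top-degree contribution ``localises on the highest weight subspace'' — is not quite what happens: the relevant computation takes place on the weight-zero vectors $v_i'\ot v_i$ via the $m=p-1$ term of the $R$-matrix sum, and the off-diagonal terms vanish by weight reasons. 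Second, even after this, one must show that plugging the error term $B$ into the surface closure still yields a polynomial of degree strictly less than $(l+1)(p-1)$; this requires the universal-invariant degree bound argument (Lemma \ref{lemma: bound for degree on RT of bottom tangle}), which is exactly the machinery of \cite{LNV:genus-unrolled}. Without both steps, the ``plumbing formula'' is an assertion rather than a lemma, and the induction — including the deplumbing direction, which depends on the multiplier $(-q)^{\pm 1}$ being a unit — has nothing to run on.
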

\medskip

To the knowledge of the authors, Theorem \ref{Thm: main fibred} is the first theorem relating quantum link invariants to fibredness. No relation seems to be known for the ``semisimple" link polynomials, such as colored Jones, HOMFLY and Kauffman polynomials, or even their categorifications such as Khovanov homology. It is worth noting that certain obstructions to fibredness can be obtained from the Witten-Reshetikhin-Turaev TQFT \cite{Gilmer:cohomology, AB:turaev-viro}.
\medskip

%the 2-loop expansion of the Kontsevich invariant, which has a genus bound \cite{Ohtsuki:2loop}

Theorem \ref{Thm: main fibred} follows from Theorem \ref{Theorem: top coefficient of ADO after plumbing} below, which studies the effect of Hopf plumbing on (the top term of) ADO invariants, together with a result of Giroux-Goodman \cite{GG:fibred} that states that all fiber surfaces in $S^3$ are obtained from a disk by plumbing/deplumbing Hopf bands (see Subsection \ref{subs: plumbing Hopf} and Figure \ref{fig:prettyplumbing}). The proof of Theorem \ref{Theorem: top coefficient of ADO after plumbing} is purely algebraic and relies on the genus bounds as established in \cite{LNV:genus-unrolled}. Note that, in contrast, the proof of the Giroux-Goodman theorem of \cite{GG:fibred} relies on contact geometry techniques.
\medskip

Our method turns out to apply to other non-semisimple quantum invariants as well, such as the Links-Gould invariant or analogues of ADO invariants for other quantized Lie algebras at roots of unity (this will be carried out in a separate paper). Indeed, all these invariants are defined in a similar way (using Verma modules of arbitrary complex highest weight) and they all satisfy a genus bound \cite{LNV:genus-unrolled, KT:Links-Gould}. The key point, and the main difference with the generic $q$ case, is the $\C^*$-graded structure of the tensor categories behind the construction of these invariants (or the ``equivariant" nature of such categories, as in \cite[Theorem 2]{LNV:genus-unrolled}).

%\textcolor{blue}{In the abstract we say our theorem follows from our previous work on genus and GG. We should say that genus bounds are necessary for Thm 2.}

%follows from the following more general result (see Subsection \ref{subs: plumbing Hopf} for the definition of plumbing):%It is possible that our theorem should follow from the (non-semisimple) TQFT for ADO invariants \cite{BDGG:ADO}, but we consider the proof here is much more elementary (except for the fact that the Giroux-Goodman theorem relies on contact geometry).

\begin{figure}[h]
\label{fig:prettyplumbing}
\begin{center}
    \includegraphics[width=6cm]{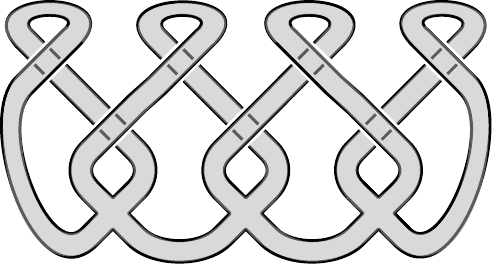}
\end{center}
\caption{The fiber surface of the fibered knot $6_3$ from plumbing two positive Hopf bands (left) and two negative ones (right), see also Section \ref{section: computations}.}
\end{figure}

For certain classes of fibred links one can say more about how they are obtained by plumbing/deplumbing Hopf bands. These classes include: fibred strongly-quasipositive links \cite{Rudolph:quasipositive-plumbing, Rudolph:quasipositive3-characterization}, homogeneous braid closures (which are fibred by \cite{Stallings:constructions-fibred}) and, in particular, positive braid closures. See Subsection \ref{subs: corollaries} for the corresponding corollaries for ADO invariants.
\medskip

For an illustration of our theorem on knots with few crossings, see Table \ref{table: N4 of small knots} in Section \ref{section: computations}. See also Table \ref{table: 12-crossing knots where Alex fails} which shows that $\ADO_4$ detects the non-fibredness of all but one (the knot $12n57$) of the non-fibered 12-crossing knots that have monic Alexander polynomial of maximal degree.
\medskip

 Of course, it is well-known that Floer-theoretic invariants {\em detect} fibredness \cite{Ni:fibred, KM:knots-sutures-excision} (knot Floer homology also detects the Hopf invariant \cite{OS:contact}), though these invariants are of a completely different nature to the ones considered here. It is also known that twisted Alexander polynomials detect fibredness \cite{FV:fibered}.

\subsection{Plan of the paper} Section \ref{section: fibred links and plumbing} contains the basic topological notions of the paper and in Section \ref{section: ADO invariants} we recall the definition of the ADO invariants. Section \ref{section: proof of main thm} is devoted to the proof of Theorem \ref{Thm: main fibred}, which is carried out in Subsection \ref{subs: PROOF OF MAIN}, and in Subsection \ref{subs: corollaries} we deduce some corollaries. In Section \ref{section: computations} we present computations and remarks.

   \subsection{Acknowledgments} The authors would like to thank Sebastian Baader, Christian Blanchet and Paul Kirk for useful conversations.

\section{Fibred links and fiber surfaces}
\label{section: fibred links and plumbing}

\subsection{Fibred links} By a {\em fibred link} in $S^3$ we mean an oriented link $L$ together with a fibration $S^3\sm L\to S^1$ in which the closure of each fiber is a Seifert surface for $L$. %Note that this definition depends on the orientation of the link, for instance, a $(2,2n)$-torus link is fibred, but reversing the orientation of a strand is no longer fibred if $n\neq \pm 1$ (of course, for knots, being fibred is independent of the orientation). 
If $L$ is fibred, then the fibration (and hence the fiber surface) is unique up to isotopy, hence being fibred is a property of links and not an additional structure. Moreover, it is well-known that the fiber surface of a fibred link is the unique Seifert surface of minimal genus. %If $L$ is a fibred link, then $S^3\sm N(L)$ is fiber-preserving homeomorphic to $F\t [0,1]$ with $F\t 0$ glued to $F\t 1$ via a diffeomorphism $f:F\t 1\to F\t 0$ with $f|_{\p F}=\id_{\p F}$.
\medskip

Let $H_+$ and $H_-$ be the positive and negative Hopf links respectively, these are fibred links. Let $A^+$ (resp. $A^-$) be an unknotted, oriented annulus embedded in $S^3$ with a negative (resp. positive) full twist. Then $\p A^{\pm}=H_{\pm}$ as oriented links, $A^{\pm}$ are the fiber surfaces for the respective Hopf links. We call $A^+$ (resp. $A^-$) a  positive (resp. negative) {\em Hopf band}.

\begin{figure}[h]
\label{fig:plumbing}
\begin{center}
    \includegraphics[width=5cm]{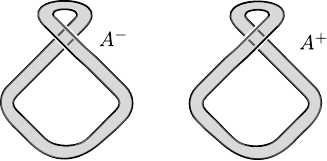}
\end{center}
\caption{The negative and positive annuli $A^\pm$.}
\end{figure}

\subsection{Plumbing Hopf bands}\label{subs: plumbing Hopf} 
Let $F \subset \SS^3$ be an oriented compact surface with boundary and
$\a \subset F$ a properly embedded simple arc. We say that a compact surface $F' \subset \SS^3$ is obtained from $F$ by {\em plumbing} a Hopf band $A^{\pm}$ along $\a$ if $F' = F \cup A^{\pm}$ in such a way that:
\begin{enumerate}
\item the intersection $A^{\pm} \cap F$ is a tubular neighborhood of $\a$ in $F$ ;
\item the core curve of $A^{\pm}$ bounds a disk in $\SS^3 \setminus F$ that near $\a$ lies on the positive side of $F$.
\end{enumerate}

%\label{fig:plumbing}
\begin{figure}[h]
\label{fig:plumbing}
\begin{center}
    \includegraphics[width=8cm]{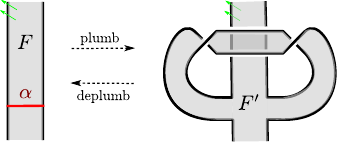}
\end{center}
\caption{The surface $F'$ is obtained from the surface $F$ by plumbing a positive Hopf band onto the arc $\alpha\subset F$. The green arrows are normal vectors indicating the orientation of $F$, these force the disk bounded by the core of the new annulus to locally be above $F$.}
\end{figure}

 We also say that $F$ is obtained by {\em deplumbing} the Hopf band $A^{\pm }$ to $F'$. See Figure \ref{fig:plumbing}. The following theorem had been conjectured by Harer in \cite{Harer:construct-fibred} and was proved by Giroux-Goodman in \cite{GG:fibred}.
\medskip

\begin{theorem}\cite{GG:fibred}
   Let $L\sb S^3$ be a fibred link with fiber surface $F$. Then $F$ can be obtained from a disk by plumbing and deplumbing Hopf bands (positive or negative). 
\end{theorem}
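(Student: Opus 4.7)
The plan is to recast the problem in the language of open book decompositions and exploit the Giroux correspondence between open books and contact structures. A fibred link $(L,F)\sb S^3$ defines an open book decomposition of $S^3$ with binding $L$ and page $F$, and the operation of plumbing a positive (resp.\ negative) Hopf band along a properly embedded arc in $F$ coincides with the operation known in the open book world as positive (resp.\ negative) stabilization. The theorem is therefore equivalent to showing that every open book of $S^3$ can be reached from the trivial disk open book by a finite sequence of positive and negative stabilizations.

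First, I would invoke the Thurston--Winkelnkemper construction, which associates to every open book a supported contact structure on the underlying $3$-manifold, together with Giroux's theorem, which asserts that positive stabilization preserves the supported contact structure and that any two open books supporting the same contact structure become isotopic after sufficiently many positive stabilizations. Since the disk open book supports the unique tight contact structure $\xi_{std}$ on $S^3$, this handles the tight case immediately: any open book of $S^3$ whose supported contact structure is tight differs from the trivial one only by positive stabilizations, i.e.\ by positive Hopf plumbings.

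Next, I would analyze the effect of a single negative stabilization on the supported contact structure. Geometrically, a negative Hopf plumbing performs a half Lutz twist along a transverse unknot, producing an overtwisted contact structure whose homotopy class of plane field differs from the original in a controlled way. Eliashberg's classification then enters: overtwisted contact structures on $S^3$ are determined up to isotopy by their homotopy class of $2$-plane field, and these homotopy classes form a $\Z$-torsor detected by the Hopf (equivalently $d_3$) invariant. Since every open book on $S^3$ supports some contact structure, the problem reduces to producing, via stabilizations of the trivial open book, representatives for every possible (tight or overtwisted) contact structure.

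The main obstacle is this last point: verifying that positive and negative stabilizations of the disk open book are rich enough to realize every homotopy class of $2$-plane field on $S^3$. I would handle it by constructing, for each integer, an explicit sequence of stabilizations of the disk open book whose output supports an overtwisted contact structure with the prescribed Hopf invariant, and then applying Giroux's theorem within a fixed overtwisted class to conclude that any other open book supporting the same class differs from the constructed model by further positive stabilizations. Combining these ingredients yields the theorem.
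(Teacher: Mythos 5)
The paper does not prove this theorem; it is cited from Giroux--Goodman \cite{GG:fibred}, with the remark that the original proof uses contact geometry. Your proposal correctly reproduces the strategy of that proof: reinterpret fiber surfaces as open books and Hopf plumbings as stabilizations, use Thurston--Winkelnkemper plus Giroux's stabilization theorem to control open books up to positive stabilization once the supported contact structure is fixed, use Torisu's observation that one negative stabilization makes the supported contact structure overtwisted, and invoke Eliashberg's classification of overtwisted contact structures on $S^3$ by the homotopy class (equivalently the Hopf or $d_3$ invariant) of the underlying plane field. You also correctly identify where the deplumbings come from: after Giroux's theorem is applied, both $F$ and the model become isotopic after further positive stabilizations, and running the stabilizations of $F$ backwards supplies the deplumbings in the statement.

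There is, however, a genuine gap in how you handle the reduction to Eliashberg's classification. You propose to realize every homotopy class of plane field on $S^3$ by stabilizing the disk open book, but this is not possible. Plumbing a positive Hopf band leaves the Hopf invariant $\la(\xi)$ unchanged and plumbing a negative one increases it by $1$ (cf.\ Subsection \ref{subs: plane fields and enhanced Milnor}); starting from the disk, which has $\la=0$, stabilization alone can therefore only produce open books with $\la\geq 0$. Since every contact structure on $S^3$ is supported by some open book, the given fiber surface $F$ may have $\la(\xi_F)<0$, and no stabilization sequence starting from the disk will produce a model with matching Hopf invariant. The fix -- which is what Giroux and Goodman actually do -- is not to build a model with the prescribed invariant from the disk, but to negatively stabilize whichever of the two open books (the given one or the disk) has the \emph{smaller} Hopf invariant until the two invariants agree and both supported contact structures are overtwisted; Eliashberg then gives an isotopy of contact structures, Giroux gives a common positive stabilization, and undoing the stabilizations applied to $F$ produces the required plumbing/deplumbing path from the disk. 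As written, your construction step silently assumes $\la(\xi_F)\geq 0$, which need not hold.
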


\begin{comment}
    
\begin{proof}
    (Delete later) The proof follows immediately from several contact geometry theorems:
    \begin{enumerate}
        \item Thurston-Wilkempenker: Any open book carries a contact structure, it is unique up to isotopy.
        \item Giroux theorem: if two open books yield isotopic contact structures, then they are related by positive stabilization (= plumbing).
        \item Eliashberg: overtwisted contact structures are classified by the homotopy of their plane fields.
        \item Torisu: negative plumbing makes contact structures overtwisted.
    \end{enumerate}
Then the proof is: suppose you have two fibered links in $S^3$, these induce contact structures in $S^3$. Now do negative plumbing to these to make them overtwisted. There are $\Z$ overtwisted contact structures in $S^3$, and after more plumbing (?), the two new open books carry isotopic contact structures. Then, by Giroux theorem, they are related by positive plumbing.
\end{proof}

And the version for knots: 

\begin{theorem}\cite{GG:fibred}
    Every fiber surface in $S^3$ whose boundary is a (fibred) knot can be obtained from a disk by plumbing and deplumbing positive trefoils and figure-eight knots.
\end{theorem}

\end{comment}

\begin{comment}
    Do we need left-handed trefoils in the statement? (GG only say positive trefoil). Are left-handed trefoils obtained from right-handed, figure eight, and deplumbings?
\end{comment}

\subsection{Plane fields and the Hopf invariant}\label{subs: plane fields and enhanced Milnor}
A fibred link induces an oriented plane field $\xi_F$ of $S^3$ as follows \cite{GG:fibred, NR:difference-index-enhanced-Milnor, Rudolph:isolated-critical}. First, the tangent planes to the fiber surfaces induce a plane field on $S^3\sm N(L)$, where $N(L)$ is a tubular neighborhood of $L$. This plane field can be extended through $N(L)$ as follows: let $(r,\theta,z)$ be oriented cylindrical coordinates around $L$. Then consider the plane field on $N(L)$ which is the kernel of $f(r)dz+r^2d\theta$ where $f:[0,1]\to [0,1]$ is smooth, $f\equiv 1$ near 0 and $f\equiv 0$ near 1. At $\p N(L)$ this coincides with the tangent planes to the fibers, so the fields glue to an oriented plane field $\xi_F$ on the whole three-sphere.
\medskip

%Under the embedding $S^3\sb \R^4$, the above oriented plane field induces a map $S^3\to \wt{G}_2(\R^2)$, that we denote by $\overline{\xi}_F$, into the Grassmannian of oriented 2-planes of $\R^4$ \cite{Rudolph:isolated-critical}. This space is homeomorphic to $S^2\t S^2$, we choose the specific homeomorphisms defined in \cite[Lemma 1.2]{Rudolph:isolated-critical}. If $l:\wt{G}_2(\R^4)\to S^2$ is the projection onto the first component of this homeomorphism, then we define $$\la(\xi_F):=-H(l\circ \overline{\xi}_F)$$where $H(f)$ is the Hopf invariant of a map $f:S^3\to S^2$ (recall that this is defined by $H(f)=lk(f^{-1}(x),f^{-1}(y))$ where $x,y$ are distinct regular values of $f$ and the fibers of $f$ have the orientation induced from $S^3$ and $S^2$). In \cite{NR:difference-index-enhanced-Milnor} this is referred as the ``enhanced Milnor number", but we will refer to it simply as the Hopf invariant of the plane field.\medskip

Now let $\xi_0$ be the oriented plane field of $S^3$ associated to the unknot with the trivial (disk) fiber. If $\xi_F$ and $\xi_0$ are in general position, then $$C_{\pm}=\{p\in S^3 \ | \ \xi_F(p)=\pm\xi_0(p)\}$$are oriented links in $S^3$, the orientation being determined by the co-orientation of the plane fields induced from the orientation of $S^3$. We denote $$\la(\xi_F):=lk(C_+,C_-),$$ this is the obstruction to homotope $\xi_F$ onto $\xi_0$. Under a trivialization of $TS^3$, $\xi_F$ corresponds to a map $f_{\xi_F}:S^3\to S^2$ whose homotopy class in $\pi_3(S^2)\cong\Z$ is determined by the {\em Hopf invariant} $H(f_{\xi_F})\in \Z$. The number $H(f_{\xi_F})-H(f_{\xi_0})$ is independent of the trivialization and coincides with $\la(\xi_F)$ \cite{NR:difference-index-enhanced-Milnor}. This is called the ``enhanced Milnor number" in \cite{NR:difference-index-enhanced-Milnor, Rudolph:isolated-critical}, but we prefer to simply call it the {\em Hopf invariant} of the plane field.

\medskip

It is shown, e.g. in \cite{GG:fibred}, that $\la(\xi_{A^+})=0$ and $\la(\xi_{A^-})=1$. Since $\la$ is additive under plumbing \cite{NR:enhanced-higher-dimensions}, it follows that if a fiber surface $F$ is obtained by plumbing $n_{\pm}$ copies of $A^{\pm}$ and deplumbing $m_{\pm}$ copies of $A^{\pm}$, then $$\la(\xi_F)=n_--m_-.$$

\section{ADO invariants}
\label{section: ADO invariants}

\def\Vla{V_{\la}}
\def\qi{q}\def\uq{\mathfrak{u}_q(\sl)}
\def\UHq{\overline{U}_q^H(\sl)}
In what follows we let $q$ be a primitive $2p$-th root of unity, say $q=e^{\pi i/p}$. For every $\la\in\C$ denote $[\la]=\frac{q^{\la}-q^{-\la}}{q-q^{-1}}$. Here $q^{a+ib}=e^{\pi (-b+ia)/p}$ for any $a,b\in\R$. For more details, see \cite{Murakami:ADO, CGP:non-semisimple}.

\def\uqp{\uq'}

\subsection{Unrolled and small quantum $\sl$}\label{subs: quantum sl2} The {\em unrolled restricted quantum group} $\UHq$ is the $\C$-algebra with generators $E,F,K^{\pm 1}, H$ and relations
\begin{align*}
[H,E]&=2E, & [H,F]&=-2F, & [H,K^{\pm 1}]&=0,\\
KE&=q^{2}EK, & KF&=q^{-2}FK, & KK^{-1}&=K^{-1}K=1, \\
[E,F]&=\frac{K-K^{-1}}{\qi-\qi^{-1}}, & E^p&=0, & F^p&=0. 
\end{align*}
This is a Hopf algebra if we define 
\begin{align*}
\De(E)&=E\ot K+1\ot E, & \e(E)&=0, & S(E)&=-EK^{-1},\\
\De(F)&=K^{-1}\ot F+F\ot 1, & \e(F)&=0, & S(F)&=-KF, \\
 \De(K)&=K\ot K, & \e(K)&=1, & S(K)&=K^{-1}, \\
 \De(H)&=H\ot 1+1\ot H, & \e(H)&=0, & S(H)&=-H.
\end{align*}
The {\em small quantum group} $\uq$ is obtained from $\UHq$ by taking the subalgebra generated by $E,F,K^{\pm 1}$ and modding out by $K^{2p}=1$. Thus, $\uq$ is finite-dimensional of dimension $2p^3$. We will consider the (ribbon) extension $\uqp$ of $\uq$ obtained by adding a group-like $k$ of order $4p$ such that $k^2=K$ and $kE=qEk$ and $kF=q^{-1}Fk$.
\medskip

\subsection{The ribbon category $\CC$}\label{subs: weight modules} A {\em weight module} over $\UHq$ is a $\UHq$-module $V$ with diagonalizable $H$-action such that $K=q^H$ as operators on $V$. The eigenvalues of $H$ are the {\em weights}. We will denote by $\CC$ the category of finite-dimensional weight $\UHq$-modules. This is braided with $c_{V,W}:V\ot W\to W\ot V$ defined by $$c_{V,W}(v\ot w)= \tau_{V,W}(\HH\TT(v\ot w))$$
where $$\TT=\sum_{m=0}^{p-1} c_m \cdot E^m\ot F^m$$
with 
\begin{align}
\label{eq: coeff cm of braiding}
    c_m=\frac{(\qa-\qa^{-1})^{m}}{[m]!}\qa^{\frac{m(m-1)}{2}},
\end{align}
$\HH$ denotes the operator $V\ot W\to V\ot W$ defined by $$\HH(v\ot w)=q^{\la \mu/2}v\ot w$$ if $v$ has weight $\la$ and $w$ has weight $\mu$, and $\tau_{V,W}$ is the transposition $v\ot w\mapsto w\ot v$. The category $\CC$ is pivotal with left evaluations and coevaluations the usual ones of vector spaces, and with right evaluations and coevaluations given by $$\rev_V(v\ot v')=v'(K^{1-p}v), \ \ \rcoev_V(1)=\sum v'_i\ot K^{p-1}v_i.$$
Here $(v_i)$ is a basis of $V$ and $(v_i')$ is the dual basis of $V^*$. It follows that there is an isomorphism $j_V:V^{**}\to V$ such that $$j_V^{-1}(v)(v')=v'(K^{1-p}v).$$

\medskip

The pivotal and braided structure are compatible, hence define a ribbon category structure on $\CC$. For any object $X$ of $\CC$, we denote by $\theta_X:X\to X$ the twist.

\subsection{Verma modules}\label{subs: Verma} For each $\la\in\C$, the Verma module $V_{\la}$ is the $\UHq$-module with basis $v_0,\dots, v_{p-1}$ and action given by 
\begin{align*}
Ev_i&=[\la-i+1]v_{i-1}, & Fv_i&=[i+1]v_{i+1}, & Hv_i&=(\la-2i)v_i, & K v_i&=q^{\la-2i}v_i,
\end{align*}
see \cite{Murakami:ADO}. The dual $V_{\la}^*$ is a $\UHq$-module with
\begin{align*}
Ev'_i&=-q^{-\la+2(i+1)}[\la-i]v'_{i+1}, & Fv'_i&=-q^{\la-2i}[i]v'_{i-1}, & Hv'_i&=(-\la+2i)v'_i, & K v'_i&=q^{-\la+2i}v'_i.
\end{align*}
Here $(v'_i)$ denote the dual basis of $V_{\la}^*$. We will refer to the above basis of $V_{\la}$ and $V_{\la}^*$ as {\em standard basis}. Any tensor product of standard basis will also be called a standard basis of the corresponding vector space (a tensor product of $V_{\la}$'s and $V_{\la}^*$'s).
\medskip

For $V_{\la}$ one has 
\begin{align}
    \label{eq: pivotal on Verma}
    j_{V_{\la}}(v''_i)=q^{(p-1)(\la-2i)}v_i
\end{align}
where $v''_i\in V_{\la}^{**}$ is the basis dual to $(v'_i)$. The twist is given by $$\theta_{V_{\la}}=q^{\la^2-(p-1)\la}\cdot\id_{V_{\la}}.$$

\def\CCZ{\CC_{[0]}}
\subsection{Relation to $\uqp$-modules} \label{subs: uqp-modules} Let $\CCZ$ be the subcategory of $\CC$ consisting of modules with integer weights. If $Y\in \CCZ$, then $K^{2p}$ acts by the identity on $Y$, so that $Y$ is a $\uqp$-module. This defines a functor $F:\CCZ\to \Rep(\uqp)$. The Hopf algebra $\uqp$ is ribbon \cite{FGST:modular} and the functor $F$ is a ribbon functor. In particular, if $Y\in \CCZ$, the twist $\theta_Y$ of $\CC$ coincides with left multiplication by the ribbon element of $\uqp$, that is $$\theta_Y=\frac{1}{4p}\sum_{i,j=0}^{4p-1}\sum_{m=0}^{p-1}c_mq^{-ij/2}k^iF^mK^{1-p}k^jE^m$$ for the $c_m\in\Q(q)$ as in (\ref{eq: coeff cm of braiding}). The inverse ribbon is given by 
$$\theta_Y^{-1}=\frac{1}{4p}\sum_{i,j=0}^{4p-1}\sum_{m=0}^{p-1}c_mq^{-ij/2}S(k^jE^m)K^{1-p}  k^iF^m.$$
This applies in particular to the $\UHq$ module $Y=V_{\la}^*\ot V_{\la}$ (for any $\la \in\C$) which clearly has integer weights.

\begin{comment}
\subsection{The module $V_{\la}^*\ot V_{\la}$} Let $Y=V_{\la}^*\ot V_{\la}$. This has only integer weights, hence it is a $\uqp$-module (where the action of $k$ is defined simply as $q^{H/2}$). Since $$q^{H\ot H/2}=\sum_{i,j=0}^{4p-1}q^{-ij/2}k^i\ot k^j$$
on $\uqp$-modules, the braiding and twist of $\CC$ on tensors products of $Y$ can be entirely computed in terms of the universal $R$-matrix $R\in \uqp\ot\uqp$. 
\end{comment}

\subsection{ADO invariants}
\label{subs: ADO invariants}
We will assume the reader is familiar with the Reshetikhin-Turaev invariant $F(T)$ of framed, oriented, $\CC$-colored tangles $T\sb \R^2\t [0,1]$, see \cite{Turaev:BOOK1}. If all components of $T$ have the same color $V\in\CC$, we will denote the invariant by $F(T,V)$. If $L$ is a (framed, oriented) link in $S^3$ and $L_o$ is a $(1,1)$-tangle (oriented upwards at the endpoints) obtained by opening $L$ at a strand, then $F(L_o, V_{\la})$ is a $\UHq$-morphism $\Vla\to \Vla$. Since $\Vla$ is simple (for generic $\la$), it must be $F(L_o,V_{\la})=\a_{\la} \cdot\id_{\Vla}$ for some $\a_{\la}\in \C$. This turns out to be independent of the place where the cut is happening, hence it is a link invariant that we will denote by $\ADO_p'(L,\la)$. The ADO invariant of the unframed, oriented link $L$ is $$\ADO_p(L,\la):=q^{(p-1)\la \cdot w(L)}\ADO_p'(L,\la)$$
where $w(L)$ is the writhe of the (framed) link $L$. Setting $t=q^{2\la}$ this becomes a Laurent polynomial in $t$ (up to an overall fractional power of $t$) which we denote $\ADO_p(L,t)$. More precisely $$\ADO_p(L,t)\in t^{\frac{(p-1)(s-1)}{2}}\Z[q^2][t^{\pm 1}]$$
where $s$ is the number of components of $L$ \cite{LNV:genus-unrolled}. Here $\Z[q]\sb \C$ is the ring of cyclotomic integers. This is not symmetric in $t$ (i.e. invariant under $t\mapsto t^{-1}$), but after the change of variable $t=q^{-2}x$ it becomes symmetric in $x$, see \cite{MW:unified}.
\medskip

Let $H_{+}$ and $H_-$ be the positive and negative Hopf links respectively. Their ADO invariants are given by (see \cite{Murakami:ADO}):

\begin{equation}
    \begin{aligned}
\label{eq: ADO of Hopf link}
    \ADO_p(H_+,t)&=\sum_{i=0}^{p-1}t^{\frac{p-1-2i}{2}}q^{-2i}=q^2t^{-(p-1)/2}+\dots+t^{(p-1)/2} \\
   \ADO_p(H_-,t)&=\sum_{i=0}^{p-1}t^{\frac{p-1-2i}{2}}q^{-2(1+i)}=t^{-(p-1)/2}+\dots+ q^{-2}t^{(p-1)/2}.
\end{aligned}
\end{equation}

In $x=q^{2}t$;

\begin{equation}
\label{eq: ADO of Hopf link}
    \begin{aligned}
    \ADO_p(H_+,x)&=-qx^{-(p-1)/2}+\dots-qx^{(p-1)/2} \\
    \ADO_p(H_-,x)&=-q^{-1}x^{-(p-1)/2}+\dots-q^{-1}x^{(p-1)/2}.
\end{aligned}
\end{equation}

\section{Proof of main theorems}
\label{section: proof of main thm}

In this section we prove Theorem \ref{Thm: main fibred}. As mentioned in the introduction, this follows from Theorem \ref{Theorem: top coefficient of ADO after plumbing} and the Giroux-Goodman theorem. Theorem \ref{Theorem: top coefficient of ADO after plumbing} follows from Proposition \ref{prop: ADO after plumbing} which studies the effect of a single Hopf plumbing on ADO invariants. In turn, Proposition \ref{prop: ADO after plumbing} follows from our study of the top term of the ribbon element in Subsection \ref{subs: top term of ribbon} and the ideas of the proof of the genus bound in \cite{LNV:genus-unrolled}, which we recall in Subsection \ref{subs: bounds universal}. We carry our proofs using the relation to universal $\uqp$-invariants as explained in Subsection \ref{subs: uqp-modules}. Our main theorems are proved in Subsection \ref{subs: PROOF OF MAIN}. In Subsection \ref{subs: corollaries} we deduce some corollaries for specific classes of fibred links.

\def\SS{S}

\subsection{The top term of the ribbon element}
\label{subs: top term of ribbon}
We set up to understand the effect of Hopf plumbing on ADO invariants. This is done in Proposition \ref{prop: top degree part of twist (or double braiding)} below. We need a couple lemmas for this.

\begin{lemma}
\label{lemma: E,F power p-1 on i different j}
    We have
    \begin{enumerate}
        \item $E^{p-1}(v_i'\ot v_j)=0$ if $j<i$.
        \item $F^{p-1}(v_i'\ot v_j)=0$ if $j>i$.
    \end{enumerate}
\end{lemma}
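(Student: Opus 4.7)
My plan is to prove both parts by a simple weight-space argument, using that $V_{\lambda}^{*}\otimes V_{\lambda}$ is a weight module with all weights bounded in absolute value by $2(p-1)$.

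First I would record the weight of each standard basis vector. The formulas given in Subsection \ref{subs: Verma} say $Hv_j=(\lambda-2j)v_j$ and $Hv'_i=(-\lambda+2i)v'_i$, so the tensor $v'_i\otimes v_j$ is a weight vector of weight
\[
(-\lambda+2i)+(\lambda-2j)=2(i-j),
\]
independently of $\lambda$. Since $0\le i,j\le p-1$, the collection of weights of $V_{\lambda}^{*}\otimes V_{\lambda}$ lies in $\{-2(p-1),-2(p-2),\dots,2(p-1)\}$.

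Next I would use the relations $[H,E]=2E$ and $[H,F]=-2F$, which imply that $E$ and $F$ shift $H$-weights by $+2$ and $-2$ respectively. Consequently, if $E^{p-1}(v'_i\otimes v_j)$ is nonzero, it is a weight vector of weight $2(i-j)+2(p-1)$, and if $F^{p-1}(v'_i\otimes v_j)$ is nonzero, it has weight $2(i-j)-2(p-1)$.

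Finally I would derive the two vanishings. If $j<i$, then $i-j\ge 1$, so $2(i-j)+2(p-1)\ge 2p$, which exceeds the maximum available weight $2(p-1)$ in $V_{\lambda}^{*}\otimes V_{\lambda}$; hence $E^{p-1}(v'_i\otimes v_j)=0$. Symmetrically, if $j>i$ then $2(i-j)-2(p-1)\le -2p$ falls below the minimum weight $-2(p-1)$, giving $F^{p-1}(v'_i\otimes v_j)=0$.

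I do not anticipate a serious obstacle: the argument is purely about weight grading, and the only point requiring care is reading off the correct weight of $v'_i$ from the stated $\UHq$-action on $V_{\lambda}^{*}$ (so that signs do not get reversed). An explicit coproduct computation using $\Delta(E)=E\otimes K+1\otimes E$ and $\Delta(F)=K^{-1}\otimes F+F\otimes 1$ would also work but seems unnecessary given the cleaner weight argument.
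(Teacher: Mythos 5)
Your weight-space argument is correct and is essentially identical to the paper's proof: both compute the weight of $v'_i\otimes v_j$ as $2(i-j)$, observe that $E^{p-1}$ and $F^{p-1}$ shift the weight by $\pm 2(p-1)$, and conclude the result vanishes because the resulting weight would exceed the range $[-2(p-1),2(p-1)]$ available in $V_\lambda^*\otimes V_\lambda$.
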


  \begin{proof}
        Each standard basis vector $v'_k\ot v_l$ has weight $w=2(k-l)$. 
        %Since $$E(v'_k\ot v_l)=c(v'_{k+1}\ot v_l)+d(v'_k\ot v_{l-1})$$for some $c,d\in \Q(q,q^{\la})$, it follows that $E$ increases weight by two. 
        Since $E$ increases weight by 2, $E^{p-1}(v'_i\ot v_j)$ has weight $2(i-j+p-1)>2(p-1)$ as $j<i$. Since there are no vectors with such weight, this must be zero. A similar argument applies to $F^{p-1}$.
    \end{proof}

\begin{lemma}
\label{lemma: E power p-1 on i=j}
    We have 
    $E^{p-1}(v'_i\ot v_i)=\displaystyle \left(\prod_{s=0}^{p-2}[\la-s]\right) v'_{p-1}\ot v_0.$

\end{lemma}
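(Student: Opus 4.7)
My plan is to expand $\Delta(E)^{p-1}$ via the $q$-binomial theorem, apply the result to $v_i'\ot v_i$, and observe that only one summand can produce a nonzero vector. First I note that $v_i'\ot v_i$ has weight zero in $V_\la^*\ot V_\la$, so $E^{p-1}(v_i'\ot v_i)$ lies in the weight-$2(p-1)$ subspace; the standard basis shows this subspace is one-dimensional, spanned by $v_{p-1}'\ot v_0$. Hence the output must be a scalar multiple of $v_{p-1}'\ot v_0$, and only this scalar requires computation.

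Since $(E\ot K)(1\ot E)=q^2(1\ot E)(E\ot K)$, the $q$-binomial theorem gives
\[
\Delta(E)^{p-1}=\sum_{k=0}^{p-1}q^{k(p-1-k)}\binom{p-1}{k}_{q}\,E^{k}\ot E^{p-1-k}K^{k},
\]
where $\binom{n}{k}_q:=[n]!/([k]![n-k]!)$ is built from $[n]=(q^n-q^{-n})/(q-q^{-1})$. Applied to $v_i'\ot v_i$, the factor $E^{k}v_i'$ vanishes for $k>p-1-i$ and $E^{p-1-k}K^{k}v_i$ vanishes for $k<p-1-i$, so only the term $k=p-1-i$ contributes. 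A routine induction from the given module-action formulas then evaluates
\[
E^{p-1-i}v_i'=(-1)^{p-1-i}q^{-(p-1-i)\la+2(p-1-i)(i+1)+(p-1-i)(p-2-i)}\prod_{s=i}^{p-2}[\la-s]\,v_{p-1}',
\]
\[
E^{i}K^{p-1-i}v_i=q^{(p-1-i)(\la-2i)}\prod_{s=0}^{i-1}[\la-s]\,v_0.
\]
The two bracket products merge into the desired $\prod_{s=0}^{p-2}[\la-s]$, and the $\la$-dependent $q$-exponents telescope away, leaving an overall scalar prefactor $(-1)^{p-1-i}\,q^{p(p-1-i)}\,\binom{p-1}{p-1-i}_q$ (after combining $q^{k(p-1-k)}$ with the exponents above and simplifying the bracket $-\la+2(i+1)+(p-2-i)+\la-2i=p-i$).

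The only delicate step, which I expect to be the main obstacle, is evaluating this residual prefactor at the $2p$-th root of unity $q$. To handle it I would establish the identity $\binom{p-1}{k}_q=1$ for all $0\le k\le p-1$: applying the quantum Pascal recursion $\binom{p}{k}_q=q^{-k}\binom{p-1}{k}_q+q^{p-k}\binom{p-1}{k-1}_q$ to the vanishing $\binom{p}{k}_q=0$ (valid because $[p]=0$) yields $\binom{p-1}{k}_q=-q^{p}\binom{p-1}{k-1}_q=\binom{p-1}{k-1}_q$ using $q^p=-1$, so induction from $\binom{p-1}{0}_q=1$ gives the claim. Finally, since $q^p=-1$, one has $q^{p(p-1-i)}=(-1)^{p-1-i}$, and the prefactor becomes $(-1)^{2(p-1-i)}=1$, completing the proof.
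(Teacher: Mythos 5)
Your proof is correct and follows essentially the same route as the paper's: expand $\Delta(E)^{p-1}$ by the $q$-binomial theorem, observe by weight considerations that only the term $k=p-1-i$ survives, and evaluate the resulting scalar using the explicit formulas for the action of $E$ on $V_\la$ and $V_\la^*$. The only cosmetic difference is your use of the balanced quantum binomial together with the clean identity $\binom{p-1}{k}_q=1$ at a primitive $2p$-th root of unity, whereas the paper works with the unbalanced $\binom{p-1}{k}_{q^{-2}}=(-1)^kq^{k(k+1)}$; these are two equivalent ways of doing the same bookkeeping.
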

  \begin{proof}
      For any $l,k$ we have 
      \begin{equation}
          \label{eq: E on v}
          E^lv_i=[\la-i+1]\dots [\la-i+l]v_{i-l}
      \end{equation}
    
      and 
       \begin{equation}
          \label{eq: E on v'}
          E^kv'_i=(-1)^kq^{-k\la+2ki+k(k+1)}[\la-i]\dots [\la-(i+k-1)]v'_{i+k}.
      \end{equation}
     By the $q$-binomial theorem $$E^{p-1}(v'_i\ot v_i)=\sum_l\binom{p-1}{l}_{q^{-2}}E^lv'_i\ot K^lE^{p-1-l}v_i.$$
     The $l$-th term on the right hand side is a scalar multiple of $v'_{i+l}\ot v_{i+l-(p-1)}$. This is nonzero only for $i+l=p-1$ hence 
$$E^{p-1}(v'_i\ot v_i)=\binom{p-1}{p-1-i}_{q^{-2}}E^{p-1-i}v'_i\ot K^{p-1-i}E^{i}v_i.$$
     Now, using (\ref{eq: E on v}) and (\ref{eq: E on v'}) with $l=i$ and $k=p-1-i$, and that $\binom{p-1}{k}_{q^{-2}}=(-1)^kq^{k(k+1)}$ we get
     $$
         E^{p-1}(v'_i\ot v_i)=(-1)^{2k}q^{k(k+1)-k\la+2ki+k(k+1)+k\la}\left(\prod_{s=0}^{p-2}[\la-s]\right)v'_{p-1}\ot v_0$$
    where $k=p-1-i$. It is easy to see that the power of $q$ in the middle is exactly $q^{2kp}$. Using that $q$ is a $2p$-th root of unity, the lemma follows.
  \end{proof}

   \begin{lemma}
   \label{lemma: F power p-1 on i=j}
   We have
        $F^{p-1}(v'_{p-1}\ot v_0)=[p-1]!\cdot \rcoev_V$.
   \end{lemma}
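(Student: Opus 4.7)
The plan is to compute $F^{p-1}$ on $V_\lambda^* \otimes V_\lambda$ by applying the quantum binomial theorem to the coproduct $\Delta(F) = K^{-1}\otimes F + F\otimes 1$. A direct check using $KF = q^{-2}FK$ shows that the two summands $(F\otimes 1)$ and $(K^{-1}\otimes F)$ are $q^2$-commuting, so
\begin{equation*}
F^{p-1}(v'_{p-1}\otimes v_0) \;=\; \sum_{l=0}^{p-1}\binom{p-1}{l}_{q^2}\,F^{p-1-l}K^{-l}v'_{p-1}\otimes F^l v_0,
\end{equation*}
where $\binom{p-1}{l}_{q^2}$ is the Gaussian binomial at $q^2$. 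This reduces the problem to evaluating each factor explicitly.

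I would then use the formulas for the $\UHq$-action on $V_\lambda$ and $V_\lambda^*$ from Subsection \ref{subs: Verma}. The vector $F^l v_0 = [l]!\,v_l$ is immediate. Iterating $Fv'_j = -q^{\lambda-2j}[j]v'_{j-1}$ gives $F^{p-1-l}v'_{p-1}$ as an explicit scalar times $v'_l$, with combinatorial factor $(-1)^{p-1-l}[p-1]!/[l]!$ and a tracked $q$-power, while $K^{-l}v'_{p-1}$ contributes a further scalar $q^{l(\lambda - 2(p-1))}$. The $[l]!$ from $F^l v_0$ cancels the $1/[l]!$ from $F^{p-1-l}v'_{p-1}$, so only $[p-1]!$ remains on the combinatorial side. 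Evaluating $\binom{p-1}{l}_{q^2}$ at our root of unity (by the same device as used in the proof of Lemma \ref{lemma: E power p-1 on i=j}, giving essentially $(-1)^l q^{-l(l+1)}$) then leaves a single scalar $S_l$ multiplying $v'_l\otimes v_l$.

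The target is $S_l = [p-1]!\,q^{(p-1)(\lambda-2l)}$, since the definition of $\rcoev_V$ together with $K^{p-1}v_i = q^{(p-1)(\lambda-2i)}v_i$ gives $\rcoev_V(1) = \sum_l q^{(p-1)(\lambda-2l)}\,v'_l\otimes v_l$. The verification is a modular arithmetic bookkeeping step: collecting all $q$-exponents, the excess beyond $(p-1)(\lambda-2l)$ simplifies to something of the form $q^{p^2-3p}$ times a sign $(-1)^{p-1}$, and one checks using $q^{2p}=1$ and $q^p = -1$ that this factor equals $+1$ in both the $p$ even and $p$ odd cases. This final sign/exponent matching is the main (purely computational) obstacle; everything else is a mechanical application of the known actions and the quantum binomial theorem.
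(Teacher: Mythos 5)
Your proposal is correct and follows essentially the same route as the paper: apply the $q$-binomial theorem to $\Delta(F)^{p-1}$, insert the explicit Verma module formulas for $F^lv_0$ and $F^kv'_{p-1}$ together with $\rcoev_V(1)=\sum_l q^{(p-1)(\la-2l)}v'_l\ot v_l$, and verify the remaining $q$-exponent collapses using $q^p=-1$. The only cosmetic difference is that you group the binomial as $\binom{p-1}{l}_{q^2}F^{p-1-l}K^{-l}\ot F^l$ while the paper writes $\binom{p-1}{l}_{q^{-2}}K^{-l}F^{p-1-l}\ot F^l$; these agree after commuting $K^{-l}$ past $F^{p-1-l}$, and your identification $\binom{p-1}{l}_{q^2}=(-1)^lq^{-l(l+1)}$ is the correct mirror of the paper's $\binom{p-1}{l}_{q^{-2}}=(-1)^lq^{l(l+1)}$.
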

\begin{proof}
    For any $k,l$ we have
    \begin{equation}
        \label{eq: F on v}
        F^lv_0=[l]!v_{l}
    \end{equation}
and
    \begin{equation}
        \label{eq: F on v'}
        F^kv'_{p-1}=(-1)^kq^{k\la+k(k+1)}[p-1]\dots [p-k]v'_{p-1-k}.
    \end{equation}

    As before, using these two formulas and the $q$-binomial theorem we find (letting $k=p-1-l$):
\begin{align*}
    F^{p-1}(v'_{p-1}\ot v_0)&=\sum_l (-1)^lq^{l(l+1)}K^{-l}F^{p-1-l}v'_{p-1}\ot F^lv_0\\
    &=\sum_l(-1)^{l+k}q^{l(l+1)+(\la-2l)l+k\la+k(k+1)}[p-1]!v'_{l}\ot v_{l}\\
    &=[p-1]!\sum_l(-1)^{l+k}q^{(k+l)(k-l+1)+(k+l)\la}v'_{l}\ot v_{l}\\
     &=[p-1]!\sum_l(-1)^{p-1}q^{(p-1)(p-2l)+(p-1)\la}v'_{l}\ot v_{l}\\
     &=[p-1]!\sum_l(-1)^{p-1}q^{(p-1)p+2l+(p-1)\la}v'_{l}\ot v_{l}\\
      &=[p-1]!\sum_lq^{2l+(p-1)\la}v'_{l}\ot v_{l} \ \text{ since $q^p=-1$}\\
      &=[p-1]!\sum_lv'_{l}\ot K^{p-1}v_{l}=[p-1]!\rcoev_V.
\end{align*}
\end{proof}

\begin{lemma}\cite{LNV:genus-unrolled}
\label{lemma: degree bound on X}
      Let $z\in\uq$ and consider the module $X=V_{\la}\ot V_{\la}^*$ with the standard basis. Then each coefficient of the matrix representing the action of $z$ on $V_{\la}\ot V_{\la}^*$ is a polynomial in $\Q(q)[q^{-2\la}]$ of degree $\leq p-1$ (in $q^{-2\la}$).

\end{lemma}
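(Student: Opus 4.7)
The plan is to reduce to a PBW basis for $\uq$ and then to check the degree bound on generators, propagating it by multiplicativity of matrix products. Recall that $\uq$ admits a $\Q(q)$-basis consisting of monomials $F^bK^cE^a$ with $0\leq a,b\leq p-1$ and $0\leq c<2p$, so by linearity it suffices to bound the $q^{-2\la}$-degree of the matrix entries of each such monomial acting on $X=\Vla\ot \Vla^*$.

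The first step is a direct computation of the action of the generators $E,F,K$ on the standard basis $v_i\ot v_j'$ of $X$, using $\De(E)=E\ot K+1\ot E$, $\De(F)=K^{-1}\ot F+F\ot 1$, and the formulas from Subsection \ref{subs: Verma}. Three observations are central. First, $K$ acts diagonally as $q^{2(j-i)}\cdot\id$, so its $q^{-2\la}$-degree is $0$. Second, $F$ acts with no $q^{\pm 2\la}$-dependence at all: in the first summand $K^{-1}v_i\ot Fv_j'$ the factor $q^{-\la}$ coming from $K^{-1}$ exactly cancels the factor $q^{+\la}$ coming from $Fv_j'=-q^{\la-2j}[j]v_{j-1}'$, while the second summand $Fv_i\ot v_j'$ carries no $\la$ to begin with. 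Third, $E$ acts with matrix entries of degree at most $1$ in $q^{-2\la}$: expanding $[\la-i+1]\cdot q^{-\la+2j}$ and $q^{-\la+2(j+1)}\cdot [\la-j]$ in the two summands of $\De(E)(v_i\ot v_j')$ produces expressions of the form $A+Bq^{-2\la}$ with $A,B\in \Q(q)$, the would-be $q^{+2\la}$-contribution being killed because the ``positive'' $q^{+\la}$ coming from one tensor factor is always compensated by a $q^{-\la}$ from the other.

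Next I would invoke multiplicativity of the degree: if two matrices have entries in $\Q(q)[q^{-2\la}]$ of degrees $\leq d_1$ and $\leq d_2$, then their product has entries of degree $\leq d_1+d_2$. Applied to the PBW monomial $F^bK^cE^a$ this gives total degree $\leq 0+0+a\leq p-1$, and $\Q(q)$-linear combinations preserve the bound, completing the proof for a general $z\in \uq$.

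The only substantive point---and the only place where the tensor-product structure of $X$ is used---is the observation that $q^{+2\la}$-terms never appear in the action of $E$ or $F$ on $X$. This is a direct consequence of the fact that $\Vla$ and $\Vla^*$ carry weights of opposite $\la$-sign combined with the specific form of $\De$; once this cancellation is recorded, what remains is bookkeeping, with the bound $a\leq p-1$ enforced by the defining relation $E^p=0$ of $\uq$. There is no real obstacle beyond these cancellations.
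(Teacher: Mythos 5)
Your argument is correct, and since the paper only cites this lemma from \cite{LNV:genus-unrolled} without reproducing a proof, there is nothing in the present text to compare against beyond the statement itself. What you give is the natural direct verification: express $z$ in the PBW basis $F^bK^cE^a$ of $\uq$ ($0\leq a,b\leq p-1$, $0\leq c<2p$), check on the standard basis $v_i\ot v'_j$ of $X$ that $K$ and $F$ contribute $q^{\pm2\la}$-degree $0$ while $E$ contributes degree at most $1$ in $q^{-2\la}$ and never $q^{+2\la}$ (each $[\la-\text{integer}]$ factor being paired with a compensating $q^{-\la}$ from $K$ or from the explicit prefactor in $Ev'_j$), and then conclude by multiplicativity of degree bounds for matrix products that a monomial contributes degree $\leq a\leq p-1$; linearity over $\Q(q)$ finishes it. The one point worth making explicit if this were to be written out in full is that the matrix of $F^bK^cE^a$ on $X$ factors as $\bigl(F|_X\bigr)^b\bigl(K|_X\bigr)^c\bigl(E|_X\bigr)^a$ because $\Delta$ is an algebra homomorphism; with that said, the bookkeeping is exactly as you describe and the bound is correct.
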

\medskip

Similarly, we have:

\begin{lemma}
\label{lemma: degree bound on Y}
  The products $F^lE^l\in \uq$ and $E^lF^l\in \uq$ act on $Y=V_{\la}^*\ot V_{\la}$ (in the standard basis) with coefficients polynomials in $q^{2\la}$ of degree $\leq l$.
\end{lemma}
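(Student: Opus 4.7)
The plan is to analyze the actions of $E^l$ and $F^l$ separately on $Y=V_\la^* \ot V_\la$ via the iterated coproducts, and then combine. The central bookkeeping question is, for each term in the expansion, to count the total power of $q^{\pm\la}$ produced and to show that these powers combine into polynomials in $q^{2\la}$ of degree at most $l$.

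First I would establish that $F^l(v_j' \ot v_k)$ carries a uniform prefactor $q^{l\la}$. Expanding $\De(F^l)$ via the $q$-binomial theorem, using $\De(F) = K^{-1}\ot F + F\ot 1$ together with the commutation $(K^{-1}\ot F)(F\ot 1) = q^2 (F\ot 1)(K^{-1}\ot F)$, produces a sum of terms of the form $\binom{l}{r}_{q^2} q^{-2r(l-r)} K^{-(l-r)} F^r \ot F^{l-r}$. By (\ref{eq: F on v'}), $F^r v_j'$ yields a prefactor $q^{r\la}$ times $\la$-free brackets $[j], [j-1],\dots$; by (\ref{eq: F on v}), $F^{l-r} v_k$ is entirely $\la$-free; and $K^{-(l-r)}$ acting on the first factor contributes $q^{(l-r)\la}$. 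Hence every summand carries the same total prefactor $q^{l\la}$, so $F^l(v_j' \ot v_k) = q^{l\la}\cdot \eta$, with $\eta$ a $\la$-free element of $Y$.

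Second I would show that each matrix coefficient of $E^l$ on $Y$ (in the standard basis) is a Laurent polynomial in $q^\la$ whose monomials $q^{m\la}$ satisfy $m\in\{-l,-l+2,\ldots,l\}$. The key expansion is $\De(E^l) = \sum_i \binom{l}{i}_{q^2} E^{l-i}\ot K^{l-i}E^i$. By (\ref{eq: E on v'}), the factor $E^{l-i}v_j'$ carries a prefactor $q^{-(l-i)\la}$, while $K^{l-i}$ acting on $E^iv_k$ (given by (\ref{eq: E on v})) contributes $q^{(l-i)\la}$. These two prefactors cancel exactly in every summand, leaving only the contribution of $l$ brackets of the form $[\la-(\text{int})]$. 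Since each such bracket is a Laurent expression of $q^\la$-degree $\pm 1$, the product of $l$ of them has monomials $q^{m\la}$ with $m\in\{-l,-l+2,\ldots,l\}$.

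Combining the two steps: $E^l F^l(v_j' \ot v_k) = q^{l\la}\cdot E^l(\eta)$, whose coefficients are $q^{l\la}\sum_m a_m q^{m\la}$ with $m\in\{-l,-l+2,\ldots,l\}$; shifting by $l$ gives exponents in $\{0,2,\ldots,2l\}$, i.e.\ a polynomial in $q^{2\la}$ of degree $\leq l$. The symmetric conclusion for $F^lE^l$ follows by applying step two first and step one second. The main obstacle is the prefactor cancellation in step two, which crucially depends on the ordering $V_\la^* \ot V_\la$; compare with Lemma \ref{lemma: degree bound on X}, where the reversed order $V_\la\ot V_\la^*$ produces polynomials in $q^{-2\la}$ instead.
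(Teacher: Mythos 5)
Your proof is correct and is essentially the careful bookkeeping that the paper leaves implicit with ``this follows easily from the formulas of Subsection~\ref{subs: Verma}.'' The decisive observations are exactly the ones you identified: the explicit $q^{\pm\la}$ prefactors of $E$ on $V_\la^*$ and of $K$ cancel in $\De(E^l)$ leaving only the $l$ brackets $[\la-\text{int}]$, and $\De(F^l)$ produces a uniform overall $q^{l\la}$ while everything else is $\la$-free. The parity argument (each bracket contributes $q^{\pm\la}$, so the product of $l$ brackets has exponents in $\{-l,-l+2,\dots,l\}$, which after multiplying by $q^{l\la}$ lands in $\{0,2,\dots,2l\}$) is exactly what is needed to get a genuine polynomial in $q^{2\la}$ of degree at most $l$, not merely a Laurent polynomial, which is what the lemma asserts and what Proposition~\ref{prop: top degree part of twist (or double braiding)} later relies on. Your remark at the end about why the order $V_\la^*\ot V_\la$ (as opposed to $V_\la\ot V_\la^*$ in Lemma~\ref{lemma: degree bound on X}) is what makes the $q^{\pm(l-i)\la}$ prefactors cancel in the $E^l$ step is a correct and useful observation.
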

\begin{proof}
    This follows easily from the formulas of Subsection \ref{subs: Verma}.
\end{proof}

Let $\theta_Y$ be the twist of $Y$. Since $Y$ is a $\uq$-module, this is simply the action of the ribbon element of $\uqp$.

\medskip
\begin{proposition}
\label{prop: top degree part of twist (or double braiding)}
Let $Y=V_{\la}^*\ot V_{\la}$. Then $$\theta_Y=\ADO_p(H_-)\cdot (\rcoev_V\circ\lev_V) +B$$
and $$\theta_Y^{-1}=\ADO_p(H_+)\cdot(\rcoev_V\circ\lev_V)+B'$$
where $B$ and $B'$ are morphisms $Y\to Y$ whose coefficients on the standard basis of $Y$ are (plain, non-Laurent) polynomials in $q^{2\la}$ of degree $<p-1$. 
\end{proposition}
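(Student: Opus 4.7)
The plan is to evaluate the ribbon action on $Y=V_\la^*\ot V_\la$ using the explicit formulas of Subsection \ref{subs: uqp-modules}, isolate the part of top $t$-degree $p-1$ (with $t=q^{2\la}$), and compare with the predicted expression. The first reduction is to the $m=p-1$ summand of $\theta_Y$: commuting $K^{1-p}$ past $E^m$ picks up only a $t$-constant scalar, the prefactors $c_m$, $k^i$, $k^j$ are also constants in $t$, and by Lemma \ref{lemma: degree bound on Y} the matrix entries of $F^m E^m$ on $Y$ are polynomials in $t$ of degree $\leq m$; thus only $m=p-1$ can contribute to degree $p-1$, and the difference between $\theta_Y$ and
\[\frac{c_{p-1}}{4p}\sum_{i,j=0}^{4p-1}q^{-ij/2}k^{i}F^{p-1}K^{1-p}k^{j}E^{p-1}\]
has matrix entries that are polynomials in $t$ of degree $<p-1$.

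Next I would evaluate the displayed summand on a diagonal element $v_a'\ot v_a$. Lemma \ref{lemma: E power p-1 on i=j} gives $E^{p-1}(v_a'\ot v_a)=\prod_{s=0}^{p-2}[\la-s]\cdot v_{p-1}'\ot v_0$; the factor $K^{1-p}$ then contributes the scalar $q^{-2(p-1)^{2}}=q^{-2}$ (using $q^{2p}=1$), and Lemma \ref{lemma: F power p-1 on i=j} gives $F^{p-1}(v_{p-1}'\ot v_0)=[p-1]!\rcoev_V(1)$. Since $\rcoev_V(1)$ has weight zero, $k^i$ acts as the identity, while $k^j$ acts on $v_{p-1}'\ot v_0$ by $q^{j(p-1)}$; the averaging $\sum_{i,j=0}^{4p-1}q^{-ij/2+j(p-1)}$ therefore collapses to $4p$ (only $j=0$ survives). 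Combining with $c_{p-1}[p-1]!=(q-q^{-1})^{p-1}q^{(p-1)(p-2)/2}$ and the top term of $\prod_{s}[\la-s]$, everything simplifies to $q^{-2}t^{p-1}\sum_{l}q^{-2l(p-1)}v_l'\ot v_l$, which is exactly the degree-$(p-1)$ part of $\ADO_p(H_-)(\rcoev_V\circ\lev_V)(v_a'\ot v_a)$ given the top coefficient $q^{-2}$ of $\ADO_p(H_-,t)$ from equation (\ref{eq: ADO of Hopf link}).

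The main technical obstacle is the off-diagonal case $v_a'\ot v_b$ with $b\neq a$, where the coefficient of $t^{p-1}$ must vanish (to match $(\rcoev_V\circ\lev_V)(v_a'\ot v_b)=0$). The case $b<a$ is immediate from Lemma \ref{lemma: E,F power p-1 on i different j}. For $b>a$, the coproduct $\Delta(E^{p-1})=\sum_l\binom{p-1}{l}_{q^{-2}}E^{p-1-l}\ot K^{p-1-l}E^l$ produces $b-a+1$ nonzero terms on $v_a'\ot v_b$, each of top $t$-degree $(p-1)/2$; applying $F^{p-1}$ via its own coproduct, the resulting coefficient of $t^{p-1}$ on each output $v_{a+m}'\ot v_{b+m}$ becomes a sum of $q$-binomial products that must telescope to zero. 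This cancellation follows from a $q$-binomial identity valid at a $2p$-th root of unity (where $(q^{-2})^p=1$ forces collapse) and can be verified by direct computation, as illustrated by the small cases $p=2,3$.

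Finally, the statement for $\theta_Y^{-1}$ follows by the same method applied to the inverse-ribbon formula. Using $S(E^{p-1})=(-1)^{p-1}q^{-(p-1)(p-2)}E^{p-1}K^{-(p-1)}$ one rewrites the $m=p-1$ summand so that $F^{p-1}$ acts first; Lemma \ref{lemma: E,F power p-1 on i different j} now kills $v_a'\ot v_b$ for $b>a$. On the diagonal one establishes analogues of Lemmas \ref{lemma: F power p-1 on i=j} and \ref{lemma: E power p-1 on i=j} by the same $q$-binomial computation: namely $F^{p-1}(v_a'\ot v_a)=[p-1]!\,t^{(p-1)/2}v_0'\ot v_{p-1}$ (independent of $a$) and $E^{p-1}(v_0'\ot v_{p-1})=\prod_{s}[\la-s]\cdot t^{-(p-1)/2}\rcoev_V(1)$; after collecting the phase factors and the $i,j$-averaging (which now concentrates at a single nonzero value of $j$), the top coefficient $1$ of $\ADO_p(H_+,t)$ is recovered, and the mirror off-diagonal case $b<a$ is handled by the analogous $q$-binomial cancellation.
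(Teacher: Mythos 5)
Your reduction to the $m=p-1$ summand, and your evaluation on the diagonal vectors $v_a'\ot v_a$, follow the paper's proof essentially verbatim (the paper packages the $k^i,k^j$ averaging into a single operator $\KK$ and notes it is the identity on weight zero; you do the same bookkeeping by hand, and both yield the coefficient $q^{-2}\prod_{s}[\la-s]\cdot c_{p-1}[p-1]!$). The problem is the off-diagonal case $b>a$, which you identify as ``the main technical obstacle'' and then do not actually resolve: you assert that the coefficients of $t^{p-1}$ on each output ``telescope to zero'' by a ``$q$-binomial identity valid at a $2p$-th root of unity,'' but you neither state the identity nor prove it, offering only a verification for $p=2,3$. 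That is a genuine gap, not a sketch of a proof. Moreover, the claimed mechanism is not obviously correct: the coproduct expansion does not visibly organize itself into a telescoping sum, and it is not clear what identity you would need.

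The paper avoids this entirely with a short algebraic step you should adopt. For $b>a$ (the paper's $j>i$), one first commutes $F^{p-1}E^{p-1}=E^{p-1}F^{p-1}+\sum_{l<p-1}(\text{poly in }K^{\pm1})\,E^{l}F^{l}$. The leading term $E^{p-1}F^{p-1}$ kills $v_a'\ot v_b$ by Lemma~\ref{lemma: E,F power p-1 on i different j}(2), and the remaining $E^{l}F^{l}$ terms (with $l<p-1$) contribute only $t$-degree $\leq l < p-1$ by Lemma~\ref{lemma: degree bound on Y}. The $K$-powers act by scalars independent of $\la$, so they do not affect the $t$-degree. This gives the needed vanishing of the top coefficient without any $q$-binomial gymnastics. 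The same commutation trick handles the mirror off-diagonal case $b<a$ for the inverse ribbon, which you also leave to an analogous unproved ``cancellation.'' If you rewrite those two passages using the commutation relation argument, your proposal would match the paper's proof.
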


\begin{comment}
 
\begin{proposition}
\label{prop: top degree part of twist (or double braiding)}
Let $Y=V_{\la}^*\ot V_{\la}$. Then $$\theta_Y=p(q^{\la})\cdot (\rcoev_V\circ\lev_V) +B$$
where $p(q^{\la})\in\Z[q][q^{\la}]$ is a Laurent polynomial of the form $$p(q^{\la})=q^{-(p-1)\la}+\text{ terms of lower degree }+q^{-2}q^{(p-1)\la}$$
and where
$B:Y\to Y$ is a morphism whose coefficients on the standard basis of $Y$ are (plain, non-Laurent) polynomials in $q^{2\la}$ of degree $<p-1$. Similarly 
$$\theta_Y^{-1}=\ADO_p(H_+)\rcoev_V\circ\lev_V+B'$$
where $B':Y\to Y$ is a morphism with the same property as $B$.
\end{proposition}

\end{comment}

\begin{proof}

\def\ttop{\theta_{top}}\def\KK{\mathcal{K}}
   
   We can write $$\theta_Y=\frac{1}{4p}\sum_{i,j=0}^{4p-1}\sum_{m=0}^{p-1}c_mq^{-ij/2}k^iF^mK^{1-p}k^jE^m$$ where $c_m\in\Q(q)$ is the coefficient of (\ref{eq: coeff cm of braiding}). Let $\ttop$ be the ``top" term of $\theta_Y$, that is
     $$\ttop=\frac{c_{p-1}}{4p}\sum_{i,j=0}^{4p-1}q^{-ij/2}k^iF^{p-1}K^{1-p}k^jE^{p-1}=\KK\cdot c_{p-1}F^{p-1}K^{1-p}E^{p-1}$$
   where $\KK=\frac{1}{4p}\sum_{i,j=0}^{4p-1}q^{-ij/2+j(p-1)}k^{i+j}$. A simple computation shows that $\KK$ acts on a vector of weight $\kappa$ by $(-1)^{\kappa}q^{-\kappa+\kappa^2/2}$, in particular, it is the identity on weight zero vectors. When $m<p-1$, Lemma \ref{lemma: degree bound on Y} implies that the $m$-th term in the sum for $\theta_Y$ only contributes coefficients of degree $<p-1$. Thus, it suffices to prove that $$\ttop=\ADO_-(H_-)\cdot (\rcoev_V\circ\lev_V) +U$$
where $U$ is an operator with the same property as $B$. We will actually show that 
$$\ttop=p(q^{\la})\cdot (\rcoev_V\circ\lev_V) +U$$
where $p(q^{\la})$ is a polynomial in $q^{\la}$ with the same top/least powers as $\ADO_p(H_-)$ and with the same coefficients on those powers, this is clearly equivalent to our proposition. We prove this by evaluating on $v'_i\ot v_j$ and distinguishing two cases:
   \medskip
   
   \textbf{Case 1, $i\neq j$:} since $\lev_V(v'_i\ot v_j)=0$, we must show that $\ttop(v'_i\ot v_j)$ has coefficients of degree $<p-1$. If $j<i$ this is zero by part $a)$ of Lemma \ref{lemma: E,F power p-1 on i different j} and we are done. Now, if $j>i$ we first commute $$F^{p-1}E^{p-1}=E^{p-1}F^{p-1}+ \sum_{a,b,c} d_{a,b,c} E^aF^bK^c$$
   where $a<p-1$ and $d_{a,b,c}$ are coefficients. Using part $b)$ of Lemma \ref{lemma: E,F power p-1 on i different j} we get $$\ttop(v'_i\ot v_j)=c_{p-1}\KK q^{-(p-1)^2}K^{1-p}F^{p-1}E^{p-1}(v'_i\ot v_j)= \sum_{a,b,c} d'_{a,b,c} E^aF^bK^c(v'_i\ot v_j)$$
   for some new coefficients $d'_{a,b,c}$, hence this case follows from Lemma \ref{lemma: degree bound on Y} since $a<p-1$.

   \medskip

   \textbf{Case 2, $i=j$:} Here we can drop the $\KK$ factor in $\ttop$ since $\KK$ is the identity on vectors of weight zero. Combining Lemmas \ref{lemma: E power p-1 on i=j} and \ref{lemma: F power p-1 on i=j} we find 
   \begin{align*}
       \ttop(v'_i\ot v_i)&=c_{p-1}F^{p-1}\left( q^{-2}\displaystyle \prod_{s=0}^{p-2}[\la-s]\cdot v'_{p-1}\ot v_{0}\right)\\
       &=c_{p-1}[p-1]!q^{-2}\left(\prod_{s=0}^{p-2}[\la-s]\right)\cdot \rcoev_V\\
       &=(q-q^{-1})^{p-1}q^{(p-1)(p-2)/2}q^{-2}\left(\prod_{s=0}^{p-2}[\la-s]\right)\cdot (\rcoev_V\circ\lev_V)(v'_i\ot v_i)\\
       &=p(q^{\la})\cdot (\rcoev_V\circ\lev_V)(v'_i\ot v_i)
   \end{align*}
for some Laurent polynomial $p(q^{\la})\in \Z[q^{\pm 1}][q^{\la}]$. The product $\prod_{s=0}^{p-2}[\la-s]$ is a Laurent polynomial in $q^{\la}$ of the form $$\frac{1}{(q-q^{-1})^{p-1}}\left((-1)^{p-1}q^{-(p-1)\la+k}+\text{ terms of lower degree }+ q^{(p-1)\la-k}\right)$$
   where $k=\sum_{s=0}^{p-2} s=(p-1)(p-2)/2$. Hence $p(q^{\la})$ has the form
   \begin{align*}
       p(q^{\la})&=q^{-2}((-1)^{p-1}q^{-(p-1)\la+(p-1)(p-2)}+\dots+q^{(p-1)\la})\\
       &=(-1)^{p-1}q^{-(p-1)\la+p^2-3p}+\dots+q^{-2}q^{(p-1)\la}\\
       &=q^{-(p-1)\la}+\dots+q^{-2}q^{(p-1)\la}.
   \end{align*}
   Here we used that $q^{p^2-3p}=(-1)^{p-1}$ since $q^p=-1$. This shows that the top/least powers and their coefficients are the same for $p(q^{\la})$ and $\ADO_p(H_-)$ (see Subsection \ref{subs: ADO invariants}), hence proves the first part of the proposition. The second assertion follows from a similar computation using the formula for the inverse ribbon element given in Subsection \ref{subs: uqp-modules}. %that $R^{-1}=(S\ot \id)(R)$ for the $R$-matrix $R$ of $\uqp$.
\end{proof}

\def\top{\text{top}_{q^{\la}}}
\def\topco{\text{top-coeff}}

\subsection{Bounds from universal invariants} 
\label{subs: bounds universal}

We will need the essential part of the argument of the genus bound for ADO as proved in \cite{LNV:genus-unrolled}. Let $T$ be a framed, oriented tangle in $\R^2\t [0,1]$ with $l$ components $T_1,\dots, T_l$ which are all open and with endpoints in $\R^2\t \{1\}$. We suppose all components are $\CC$-colored with $X=V_{\la}\ot V_{\la}^*$. To simplify notation in the formulas for $F_{\CC}(T,X)$ below we will suppose that, when $T$ is drawn in the plane, $\p T$ consists of $2l$ points $x_1,\dots, x_{2l}$ ordered from left to right and for each $1\leq i\leq l$, $\p T_i=\{x_i, x_{l+i}\}$ and $T_i$ is oriented from $x_i$ to $x_{l+i}$, see Figure \ref{fig.TangleT} (left). 

\begin{figure}[htp!]
\includegraphics[width=10cm]{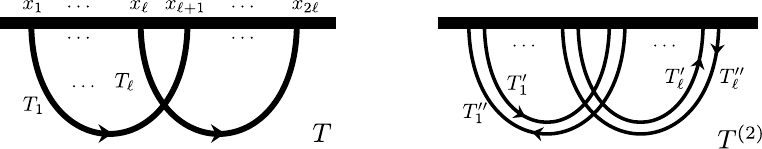}
\caption{The tangle $T$ and its doubled version $T^{(2)}$.}
\label{fig.TangleT}
\end{figure}

Thus, the Reshetikhin-Turaev invariant $F_{\CC}(T,X)$ is a morphism $$F_{\CC}(T,X):\C\to (X^*)^{\ot l}\ot X^{\ot l}.$$ 

\def\coev{\text{coev}}\def\Mod{\text{-mod}}
As explained in Subsection \ref{subs: uqp-modules}, $X$ is a $\uqp$-module and the braiding and twist coming from $\CC$ coincide with the braiding and twist defined from the ribbon structure of $\uqp$, hence $F_{\CC}(T,X)=F_{\uqp\Mod}(T,X).$
Now, since $\uqp$ is a finite-dimensional ribbon Hopf algebra, $F_{\uqp\Mod}$ is determined by its {\em universal invariant} \cite{Habiro:bottom-tangles}. This is an element $Z_T\in \uqp^{\ot l}$ satisfying $$F_{\uqp\Mod}(T,X)=(\id_{(X^*)^{\ot l}}\ot L_{Z_T})\circ\coev_{X^{\ot l}}.$$
Here $\coev$ denotes the standard right coevaluation of vector spaces and $L_{Z_T}$ denotes left multiplication by $Z_T$ on $X^{\ot l}$. More explicitly, this means that
$$F_{\CC}(T,X)(1)=\sum_{i_1,\dots,i_l\in I} w'_{i_1}\ot\dots \ot w'_{i_l}\ot (a_{1} w_{i_1})\ot\dots\ot (a_{l} w_{i_l}) $$
where $Z_T=\sum a_{1}\ot\dots\ot a_{l}$, $\{w_i\}_{i\in I}$ is a basis of $X$ and $\{w'_i\}_{i\in I}$ is the dual basis. If we take the standard basis $\{v_i\ot v'_j\}$ of $X$ and apply Lemma \ref{lemma: degree bound on X} to the last $l$ tensor slots of $F_{\CC}(T,X)$ in the above formula, we get:

\begin{lemma}
    The vector $F_{\CC}(T,X)(1)\in (X^*)^{\ot l}\ot X^{\ot l} $
    can be written as a linear combination of standard basis vectors where each coefficient is a polynomial in $q^{-2\la}$ of degree $\leq l(p-1)$.
\end{lemma}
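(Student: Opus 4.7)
The plan is to combine the universal-invariant presentation of $F_{\CC}(T,X)$, already laid out just before the statement, with the coefficientwise degree bound of Lemma \ref{lemma: degree bound on X}. First I would fix the explicit formula
\[
F_{\CC}(T,X)(1) = \sum_{i_1,\dots,i_l\in I} w'_{i_1}\ot\cdots\ot w'_{i_l}\ot (a_{1} w_{i_1})\ot\cdots\ot (a_{l} w_{i_l}),
\]
where $Z_T=\sum a_1\ot\cdots\ot a_l\in \uqp^{\ot l}$ and $\{w_i\}_{i\in I}$ is the standard basis of $X=V_{\la}\ot V_{\la}^*$ (so $\{w'_i\}_{i\in I}$ is the dual basis of $X^*$). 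With this formula in hand, all of the dependence on $\la$ is concentrated in the vectors $a_j w_{i_j}\in X$, and the target estimate will follow once we bound the coefficients of each such vector in the standard basis.

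Next I would address a small technical point: Lemma \ref{lemma: degree bound on X} is stated for elements of $\uq$, whereas the universal invariant $Z_T$ lives in the ribbon extension $\uqp^{\ot l}$. This is harmless because $X$ has only integer weights, so the extra generator $k$ acts on each standard basis vector by a scalar of the form $q^{n/2}\in \Z[q,q^{-1}]$. Thus the $k$-factors inside each $a_j$ contribute only constants (in $q^{-2\la}$), and the action of $a_j$ on any $w_{i_j}$ still expands in the standard basis with coefficients that are polynomials in $q^{-2\la}$ of degree $\leq p-1$. In other words, Lemma \ref{lemma: degree bound on X} applies verbatim to $\uqp$ acting on $X$.

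Now I would assemble the bound. For any fixed pair of standard multi-indices $(i_1,\dots,i_l)$ and $(j_1,\dots,j_l)$, the coefficient of the standard basis vector $w'_{i_1}\ot\cdots\ot w'_{i_l}\ot w_{j_1}\ot\cdots\ot w_{j_l}$ in $F_{\CC}(T,X)(1)$ is a finite sum (over the summands making up $Z_T$) of products of $l$ coefficients of the type controlled in the previous paragraph, one per tensor slot. Each such product is a polynomial in $q^{-2\la}$ of degree at most $l(p-1)$, and summing preserves this degree bound, which is exactly the claim.

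The only conceptually nontrivial input is Lemma \ref{lemma: degree bound on X} itself; everything else amounts to the observation that universal invariants tensor-multiply degrees additively. The one place that could trip up a careless reader is the $k$-extension mentioned above, so I would spell that out explicitly, since without it the statement ``polynomial in $q^{-2\la}$ of degree $\leq p-1$'' for the action would not literally apply to $\uqp$-elements.
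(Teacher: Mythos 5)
Your proof is essentially the paper's own argument: the paper derives this lemma by applying Lemma \ref{lemma: degree bound on X} to the last $l$ tensor slots of the displayed formula $F_{\CC}(T,X)(1)=\sum w'_{i_1}\ot\cdots\ot w'_{i_l}\ot (a_1 w_{i_1})\ot\cdots\ot(a_l w_{i_l})$, exactly as you do. Your extra remark that Lemma \ref{lemma: degree bound on X} is stated for $\uq$ while $Z_T\in\uqp^{\ot l}$, and that this is harmless because $X=V_\la\ot V_\la^*$ has integer weights so $k$ acts by $\la$-independent powers of $q$, correctly fills in a small gap the paper leaves implicit.
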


Now let $T^{(2)}$ be the framed, oriented $\CC$-colored tangle obtained from $T$ as follows: each component $T_i$ is duplicated (with its framing) to get a pair $T'_i, T''_i$. Near the endpoint $x_{l+i}$ of $T_i$ we suppose $T'_i$ is to the left of $T''_i$ and that $T'_i$ is oriented upwards while $T''_i$ is oriented downwards. It follows that near the starting point $x_i$ of $T_i$, $T'_i$ is to the right of $x_i$ oriented downwards and $T''_i$ is to the left oriented upwards. See Figure \ref{fig.TangleT} (right). If all components of $T^{(2)}$ are colored with $V_{\la}$, it follows that the Reshetikhin-Turaev invariant is a morphism $$F_{\CC}(T^{(2)}, V_{\la}):\C\to (V_{\la}\ot V_{\la}^*)^{\ot 2l}=X^{\ot 2l}.$$

It is easy to see that $$F_{\CC}(T^{(2)}, V_{\la})=(J^{\ot l}\ot \id_{X^{\ot l}})\circ F_{\CC}(T,X)$$
where $J:X^*\to X$ is the isomorphism coming from the pivotal structure. Since each $J$ brings an overall power $q^{\la}(p-1)$ the previous lemma implies:

\begin{lemma}
\label{lemma: bound for degree on RT of bottom tangle}
    Write $F_{\CC}(T^{(2)},X)(1)\in X^{\ot 2l}$
    as a linear combination of standard basis vectors. Then each coefficient is a Laurent polynomial in $q^{\la}$ of maximal degree $\leq l(p-1)$ and minimal degree $\geq -l(p-1)$.
\end{lemma}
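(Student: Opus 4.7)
The plan is to reduce this to the preceding lemma by keeping track of how the $l$ pivotal isomorphisms $J$ affect the powers of $q^{\la}$. The preceding lemma already tells us that, in the standard basis of $(X^*)^{\ot l}\ot X^{\ot l}$, each coefficient of $F_{\CC}(T,X)(1)$ is a polynomial in $q^{-2\la}$ of degree at most $l(p-1)$. The identity
$$F_{\CC}(T^{(2)},V_{\la})=(J^{\ot l}\ot \id_{X^{\ot l}})\circ F_{\CC}(T,X)$$
then reduces everything to understanding the $\la$-dependence of $J^{\ot l}$.

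Each copy of $J:X^*\to X$ is built from the pivotal isomorphism $j_{V_{\la}}$, which by equation (\ref{eq: pivotal on Verma}) sends $v''_i\mapsto q^{(p-1)(\la-2i)}v_i$. Decomposing $J$ compatibly with the standard basis of $X^*$ (coming from the standard duals of Verma modules recorded in Subsection \ref{subs: Verma}), one sees that $J$ multiplies each standard basis vector by a scalar of the form $q^{(p-1)\la}\cdot c$, where $c$ is a $\la$-independent constant. Applying $J^{\ot l}$ therefore contributes an overall factor of $q^{l(p-1)\la}$ times $\la$-independent constants on each basis element.

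Combining these two observations, each coefficient of $F_{\CC}(T^{(2)},V_{\la})(1)$ in the standard basis of $X^{\ot 2l}$ has the form $q^{l(p-1)\la}\cdot P(q^{-2\la})$ with $\deg P\leq l(p-1)$. Viewed as a Laurent polynomial in $q^{\la}$, such an expression has maximum degree $\leq l(p-1)$ (attained when $P$ has nonzero constant term) and minimum degree $\geq l(p-1)-2l(p-1)=-l(p-1)$ (attained when $P$ has nonzero top coefficient), which is exactly the claim.

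No substantial obstacle is expected here: the proof is essentially a bookkeeping exercise once the preceding lemma is in hand. The only point requiring some care is to confirm that $j_{V_{\la}}$ is the sole source of new $q^{\la}$-dependence introduced by $J$, i.e. that the $V_{\la}^*$ tensor factor inside $X$ contributes no extra $\la$-dependence under dualization; this is immediate from the explicit action of $K$ on the standard dual basis $(v'_i)$ given in Subsection \ref{subs: Verma}.
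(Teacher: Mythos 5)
Your proof is correct and follows the same route the paper takes: apply the preceding lemma to $F_{\CC}(T,X)(1)$, pass to $F_{\CC}(T^{(2)},V_{\la})$ via the identity involving $J^{\ot l}$, and observe that each $J$ contributes an overall $q^{(p-1)\la}$ through the pivotal isomorphism $j_{V_{\la}}$. The paper states this more tersely but the substance is identical.
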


In general, if the endpoints of $T$ are displayed in a different way along $\R^2\t\{1\}$, the formulas for $F_{\CC}(T,X)$ above differ by a permutation isomorphism (of vector spaces) and the lemmas remain unchanged (except for the target vector space of $F(T,X)$).

\def\top{\text{max-deg}_{q^{\la}}}

\subsection{Proof of main theorems} 
\label{subs: PROOF OF MAIN}
In what follows, if $h(q^{\la})$ is a Laurent polynomial in $q^{\la}$ we denote by $\top h(q^{\la})$ the maximal power of $q^{\la}$ appearing in $h(q^{\la})$.  %We similarly use the notation $\topx p(x)$ for a Laurent polynomial in the variable $x$.

\medskip

\begin{proposition}
\label{prop: ADO after plumbing}
Let $F$ be a connected Seifert surface for a link $L$ and let $F'$ be obtained from $F$ by plumbing a Hopf band $A^{\pm}$. Let $L'=\p F'$ and $l=\dim H_1(F)$. Then 
    \begin{equation*}
    \label{eq: ADO after plumbing}
        \ADO_p(L',\la)=\ADO_p(H_{\pm},\la)\ADO_p(L,\la)+f(q^{\la})
    \end{equation*}
    where $f(q^{\la})$ is a polynomial in $q^{\la}$ with $\top f(q^{\la})<(l+1)(p-1)$. 
\end{proposition}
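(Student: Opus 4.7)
My approach is to translate Hopf plumbing into a local algebraic operation on the universal $\uqp$-invariant, so that Proposition \ref{prop: top degree part of twist (or double braiding)} can be applied directly. Concretely, $F'$ is obtained from $F$ by attaching one new band whose core carries a full $\mp$ twist (for $A^{\pm}$); in the Seifert-surface tangle presentation of Subsection \ref{subs: bounds universal}, this adjoins one new component to $T$ and places the ribbon element $\theta^{\mp 1}$ on it. Via the pivotal identification of Subsection \ref{subs: uqp-modules}, this bead acts on the corresponding tensor factor $Y=V_{\la}^{*}\ot V_{\la}$ as $\theta_Y^{\mp 1}$.

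Now substitute the decomposition
$$\theta_Y^{\mp 1}=\ADO_p(H_{\pm})\cdot(\rcoev_V\circ\lev_V)+B^{\pm}$$
from Proposition \ref{prop: top degree part of twist (or double braiding)} into the universal-invariant expression for $\ADO_p(L',\la)$. The first summand is a rank-one morphism: the factor $\rcoev_V\circ\lev_V$ diagrammatically disconnects the new strand from the rest of the tangle, reducing its contribution to a Hopf-linked closed loop that evaluates to the scalar $\ADO_p(H_{\pm})$, while the complementary tangle contraction reproduces exactly the Reshetikhin--Turaev computation of $\ADO_p(L,\la)$ for $F$. This yields the main term $\ADO_p(H_{\pm})\cdot\ADO_p(L,\la)$.

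For the remainder $B^{\pm}$ one has to bound its top degree. I would rerun the argument of Subsection \ref{subs: bounds universal} on the $(l+1)$-component tangle $T'$: the maximal power of $q^{\la}$ coming from one bead, after the pivotal $J$-shift, is $p-1$, and these maxima add over the $l+1$ components to give a generic bound of $(l+1)(p-1)$. This bound is saturated only if every bead achieves its individual maximum. Since by construction the coefficients of $B^{\pm}$ have strictly smaller degree in $q^{2\la}$ than the dominant $\theta_Y^{\mp 1}$-contribution that produced the Hopf-link factor, the new bead cannot hit its maximum, and the overall top degree must drop strictly, giving $\top f(q^{\la})<(l+1)(p-1)$.

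The main obstacle is the first-paragraph identification of plumbing with the insertion of $\theta_Y^{\mp 1}$, together with the geometric claim in paragraph two that the rank-one factor really splits off the Hopf-link contribution with the correct normalization. Both steps require a careful diagrammatic verification at the plumbing region: keeping track of the framings on the new band, of the pivotal conventions of $\CC$, and of how the writhe correction entering the normalization of $\ADO_p$ changes under plumbing, so that the product $\ADO_p(H_{\pm})\cdot\ADO_p(L,\la)$ arises with the correct overall $q$-prefactor and no spurious corrections.
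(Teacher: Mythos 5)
Your proof plan has the right skeleton and matches the paper's approach: interpret the plumbing of $A^{\pm}$ as an insertion of $\theta_Y^{\mp 1}$ into the tangle picture of $F$, decompose via Proposition~\ref{prop: top degree part of twist (or double braiding)} into the rank-one piece $\ADO_p(H_{\pm})\cdot(\rcoev_V\circ\lev_V)$ (which factors off $\ADO_p(H_{\pm})\ADO_p(L)$) plus the remainder $B^{\pm}$, and bound the remainder. Those are exactly the steps the paper takes, and the sign conventions you state ($A^{\pm}\leftrightarrow\theta^{\mp 1}$) are correct.

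The gap is in your degree bound. The claim that the coefficients of $B^{\pm}$ have strictly smaller degree in $q^{2\la}$ than the coefficient $\ADO_p(H_{\pm})$ of the rank-one term is false for $p\geq 3$: by Proposition~\ref{prop: top degree part of twist (or double braiding)} the coefficients of $B$ are polynomials in $q^{2\la}$ of degree up to $p-2$, hence top power $2(p-2)$ in $q^{\la}$, and this is $\geq p-1=\top\,\ADO_p(H_{\pm})$ once $p\geq 3$. So ``the new bead cannot hit its maximum'' is not immediate, and the heuristic of adding $p-1$ over $l+1$ components does not by itself give the strict inequality. The paper instead writes the explicit closure formula for $f(q^{\la})v_0$ and counts powers term by term: the left evaluations contribute no $q^{\la}$, the single right evaluation $\rev_{V_{\la}}$ contributes $q^{-(p-1)\la}$, Lemma~\ref{lemma: bound for degree on RT of bottom tangle} gives at most $l(p-1)$ from $F_{\CC}(T^{(2)},V_{\la})(1)$, and $B$ gives at most $2(p-2)$; hence $\top f\leq l(p-1)+2(p-2)-(p-1)=(l+1)(p-1)-2<(l+1)(p-1)$. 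Replacing the per-bead heuristic with this explicit accounting is the missing step. A secondary point: the paper keeps $T$ fixed and inserts $\theta_Y^{\mp 1}$ at a cap of the top disk, rather than sliding the Hopf band down to become an $(l+1)$-st tangle component carrying the bead $\theta^{\mp 1}$; both pictures can be made to work, but the cap modification avoids precisely the isotopy bookkeeping you flag as the main obstacle.
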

\begin{proof}
      We will prove it for $A_-$-plumbing, the positive case is similar. Let $\a\sb F$ be the arc where the plumbing occurs. Since $F$ is connected, after an isotopy, we can suppose $F$ is obtained from a $l$-component (framed) tangle $T$ with ends on $\R^2\t\{1\}$ by attaching a disk on top, and that the arc $\a$ is a vertical arc on that disk: 
      \begin{figure}[H]
          \centering
          \includegraphics[width=4cm]{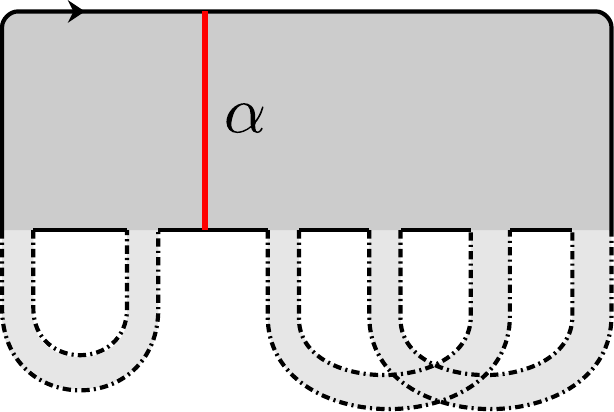}
    
      \end{figure}
      Do the plumbing along $\a$ and apply Proposition \ref{prop: top degree part of twist (or double braiding)} to the Hopf band that was plumbed:
    \medskip

%\begin{figure}[H]  \centering \includegraphics[width=10cm]{proof.png}\end{figure}

\begin{figure}[H]
\begin{align*}
\ADO_p(L')=\ADO_p&\left(\rule{0cm}{2cm}\ \begin{matrix}
\includegraphics[width=4cm]{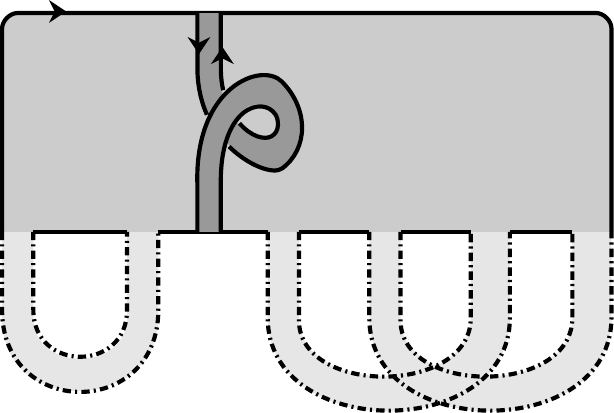}
\end{matrix}\ \right) =\\
\ADO_p(H_-)\cdot\ADO_p\ 
&\left(\rule{0cm}{2cm}\begin{matrix}
\includegraphics[width=4cm]{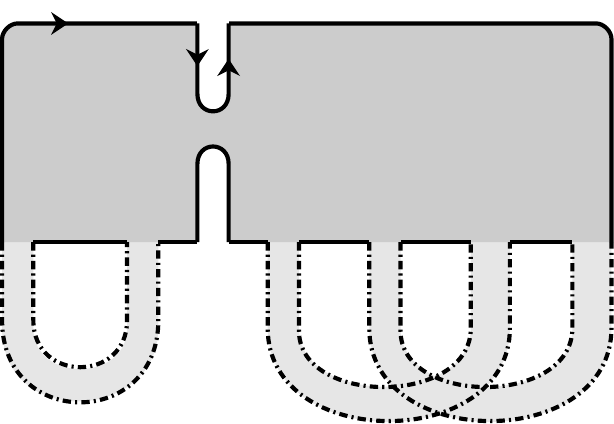}
\end{matrix}\ \right)  +
\ADO_p\left(\rule{0cm}{2cm}\ 
\begin{matrix}
\includegraphics[width=4cm]{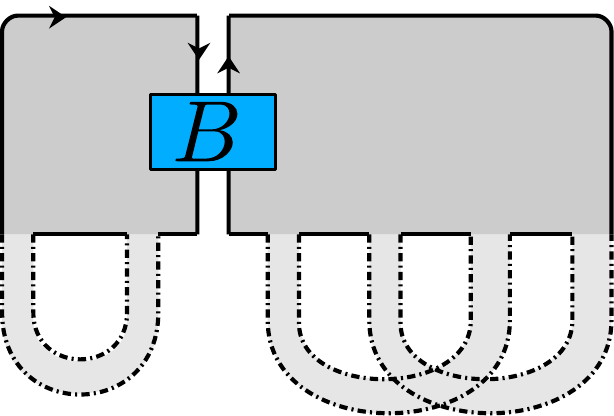}
\end{matrix}\ \right)
\end{align*}
\end{figure}

\medskip

In the second line of the figure above, the LHS is precisely $\ADO_p(H_-)\ADO_p(L)$. Note that we defined $\ADO_p$ only for links, but our definition readily extends to graphs with coupons as the graph $G$ in the RHS of the above equation: if $G_o$ is the graph $G$ opened at a point, say at the top right corner, then $F_{\CC}(G_o)$ is a morphism $V_{\la}\to V_{\la}$ that must be multiplication by a scalar $f(q^{\la})$. We define this scalar as $\ADO_p(G)$. We now show that $f(q^{\la})$ satisfies the bound of the proposition. Let $T$ be the tangle formed by the bands of the surface. Then $$f(q^{\la})\id_{V_{\la}}=F_{\CC}(G_o)=(\rev_{V_{\la}}\ot \id_{V_{\la}})\circ(\id_{V_{\la}}\ot\lev_{V_{\la}}^{\ot k-1}\ot B\ot \lev_{V_{\la}}^{\ot 2l-k})\circ (F_{\CC}(T^{(2)},V_{\la})\ot \id_{V_{\la}}) $$
so that
$$f(q^{\la})v_0=(\rev_{V_{\la}}\ot \id_{V_{\la}})\circ(\id_{V_{\la}}\ot\lev_{V_{\la}}^{\ot k-1}\ot B\ot \lev_{V_{\la}}^{\ot 2l-k})[F_{\CC}(T^{(2)},V_{\la})(1)\ot v_0] $$
where $k$ is the number of feet of 1-handles attached to the left of the arc $\a$ on the bottom of the disk and $v_0$ is the highest weight vector of $V_{\la}$. By Lemma \ref{lemma: bound for degree on RT of bottom tangle}, $F(T^{(2)}, V_{\la})(1)$ has coefficients polynomials in $q^{\la}$ with maximal power $\leq l(p-1)$ in the standard basis. By Proposition \ref{prop: top degree part of twist (or double braiding)}, the morphism $B$ brings powers of $q^{\la}$ up to $2(p-2)$, note also that the left evaluations $\lev_{V_{\la}}$ bring no powers of $q^{\la}$ into the formula. Finally, the right evaluation $\rev_{V_{\la}}$ multiplies everything by an overall $q^{-(p-1)\la}$. In total, it follows that the maximal power of $q^{\la}$ in the coefficients of the final expression (which is $f(q^{\la})v_0$) is at most $$q^{l(p-1)\la-(p-1)\la+2\la(p-2)}=q^{(l+1)(p-1)\la-2\la}.$$
This proves the desired degree bound on $f(q^{\la})$, hence the theorem.
\end{proof}

\def\ttop{\text{top coefficient}_{q^{\la}}}

\def\topx{\text{top coefficient}_x}

\begin{theorem}
\label{Theorem: top coefficient of ADO after plumbing}
    Let $L_0\sb S^3$ be a link and let $p\geq 2$ be such that $\ADO_p(L_0)$ has maximal degree allowed by (\ref{eq: genus bound for ADO}). Suppose $L$ is obtained from $L_0$ by plumbing $n_+$ (resp. $n_-$) positive (resp. negative) Hopf links and deplumbing $m_+$ (resp. $m_-$) positive (resp. negative) Hopf links. Then $\ADO_p(L)$ also has maximal degree allowed by (\ref{eq: genus bound for ADO}) and
\begin{align*}
%\label{eq: top power of x}
 \text{top coefficient of $\ADO_p(L,x)$}=(-q)^{n_+-n_--m_++m_-}\cdot \text{top coefficient of $\ADO_p(L_0,x)$}.
\end{align*}
\end{theorem}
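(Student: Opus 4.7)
The plan is to induct on the total number $N = n_+ + n_- + m_+ + m_-$ of Hopf (de)plumbing operations, using Proposition~\ref{prop: ADO after plumbing} as the engine. The base case $N=0$ is tautological. For the inductive step, suppose that after $k<N$ operations we have reached an intermediate link $L_k$ bounding a connected Seifert surface $F_k$, with $l_k := \dim H_1(F_k)$ equal to $l_0 = 2g(L_0)+s-1$ plus the signed count of plumbings minus deplumbings performed so far. I would carry two inductive invariants: (a) $\top \ADO_p(L_k,\la) = l_k(p-1)$, and (b) $\topx \ADO_p(L_k,x) = (-q)^{e_k}\,\topx \ADO_p(L_0,x)$, where $e_k$ weights plumbings of $A^+$ and deplumbings of $A^-$ by $+1$ and the other two operations by $-1$.

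For a plumbing step by $A^{\pm}$, Proposition~\ref{prop: ADO after plumbing} gives
\begin{equation*}
\ADO_p(L_{k+1},\la) = \ADO_p(H_{\pm},\la)\,\ADO_p(L_k,\la) + f(q^\la),\qquad \top f < (l_k+1)(p-1).
\end{equation*}
By \eqref{eq: ADO of Hopf link}, $\ADO_p(H_{\pm},\la)$ has $q^\la$-top degree $p-1$ with nonzero top coefficient ($1$ for $H_+$, $q^{-2}$ for $H_-$). Combined with (a), the product attains $q^\la$-top degree exactly $(l_k+1)(p-1) = l_{k+1}(p-1)$, strictly larger than $\top f$, so (a) propagates and the top $q^\la$-coefficient of $\ADO_p(L_{k+1},\la)$ is the product of the two. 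Translating via $x = q^{2+2\la}$ (so that $q^\la = x^{1/2}/q$ and each extra handle shifts the top $x$-coefficient by $q^{-(p-1)} = -q$, using $q^p=-1$), a short computation shows the top $x$-coefficient is multiplied by $-q$ for $A^+$ and by $-q^{-1}$ for $A^-$, confirming (b).

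For a deplumbing step I would apply the same proposition in reverse, writing $L_k$ as obtained from $L_{k+1}$ by plumbing:
\begin{equation*}
\ADO_p(L_k,\la) = \ADO_p(H_{\pm},\la)\,\ADO_p(L_{k+1},\la) + f(q^\la),\qquad \top f < l_k(p-1).
\end{equation*}
By (a) at step $k$ the left side has $\top$-degree exactly $l_k(p-1)$, which $f$ cannot reach, so the product on the right must saturate this degree. This forces $\top\ADO_p(L_{k+1},\la) = (l_k-1)(p-1) = l_{k+1}(p-1)$ and determines its top coefficient by division. The induced factor on $\topx$ is the inverse of the plumbing factor: $(-q)^{-1}$ for deplumbing $A^+$ and $(-q)^{+1}$ for deplumbing $A^-$, confirming (b). Accumulating over all $N$ steps yields the claimed $(-q)^{n_+-n_--m_++m_-}$ factor, and the maximality $l_N(p-1) = (2g(L)+s-1)(p-1)$ follows from (a) together with the genus bound \eqref{eq: genus bound for ADO}.

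The substantive work is encoded in Proposition~\ref{prop: ADO after plumbing}; given it, the theorem is a bookkeeping induction. The main obstacle I expect is ensuring each intermediate surface $F_k$ remains connected so that the proposition applies, which I would arrange by scheduling the operations appropriately (plumbings preserve connectedness, and deplumbings can be ordered to avoid disconnecting). The only computation requiring care is the change of variable between $q^\la$ and $x$, handled uniformly by $q^p = -1$.
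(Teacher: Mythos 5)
Your proof is correct and takes essentially the same approach as the paper: both arguments apply Proposition~\ref{prop: ADO after plumbing} step by step along the chain of Hopf (de)plumbings, treat deplumbing by reading the single-step relation in reverse (the paper phrases this as the equivalence \eqref{eq: max deg after plumbing iff max deg before}), and then convert to the $x$-variable using $q^p=-1$. The connectedness concern you flag is not an actual obstacle: by definition every intermediate surface in the chain is a Seifert surface and hence connected, and the maximal-degree hypothesis propagating down the chain automatically forces each intermediate surface to have minimal genus, so the genus bound is saturated at every step.
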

\begin{proof}
Let $L_1\sb S^3$ be any $s_1$-component link such that $\ADO_p(L_1)$ has maximal degree and suppose $L_2$ is obtained by plumbing a single Hopf band $A^{\pm}$ to a minimal genus Seifert surface of $L_1$. Let $l=2g(L_1)+s_1-1$. The genus bound (\ref{eq: genus bound for ADO}) of ADO implies $$\top \ADO_p(L_2,\la)\leq (l+1)(p-1), \ \top \ADO_p(L_1,\la)\leq l(p-1).$$
    Since $\top \ADO_p(H_{\pm})=p-1$ (see (\ref{eq: ADO of Hopf link}), recall $x=q^{2\la+2}$), the condition on $f(q^{\la})$ of Proposition \ref{prop: ADO after plumbing} implies that 
    \begin{align}
         \label{eq: max deg after plumbing iff max deg before}
    \top\ADO_p(L_2,\la)=(l+1)(p-1) \ \text{ if and only if } \ \top\ADO_p(L_1,\la)=l(p-1)
        \end{align}
    and in such a case, one has
    \begin{align*}
        \ttop \ADO_p(L_2,\la)=\ttop\ADO_p(H_{\pm})\cdot\ttop \ADO_p(L_1,\la).
    \end{align*}
    In the variable $x=q^{2\la+2}$ this reads 
    \begin{align}
     \label{eq: top coeff of ADO under plumbing}
        \topx \ADO_p(L_2,x)=(-q)^{\e}\topx \ADO_p(L_1,x)
    \end{align}
    where $\e=\pm 1$ is the sign of the plumbing. Now, starting with a fixed link $L$ with $\ADO_p(L)$ of maximal degree, (\eqref{eq: max deg after plumbing iff max deg before}) implies that any link $L'$ obtained by plumbing/deplumbing on $L$ will have the same property on the top degree (and hence on the degree by symmetry under $x\mapsto x^{-1}$). Moreover, since $q$ is invertible, (\ref{eq: top coeff of ADO under plumbing}) implies the desired formula for the top coefficient of $x$ in $\ADO_p(L')$.
\end{proof}

\begin{proof}[Proof of Theorem \ref{Thm: main fibred}]
If $L$ is fibred then, by the Giroux-Goodman theorem, we can let $L_0$ be the unknot in Theorem \ref{Theorem: top coefficient of ADO after plumbing} and suppose $L$ is obtained by plumbing/deplumbing Hopf links on $L_0$ as stated there. Since $N_p(L_0)=1$, we get that $N_p(L)$ has maximal degree and that its top coefficient is $(-q)^{n_+-n_--m_++m_-}$. The explicit form for the coefficient in terms of the Hopf invariant follows from $$2g(L)+s-1=\dim H_1(F;\R)=n_++n_--m_+-m_- \ \text{ and } \ \la(\xi_F)=n_--m_-,$$
   see Subsection \ref{subs: plane fields and enhanced Milnor}.
\end{proof}

\begin{corollary}
\label{corollary: all ADOs detect enhanced Milnor}
    The family $\{\ADO_p(L)\}_{p\geq 2}$ of ADO invariants of a fibred link in $S^3$ detects the Hopf invariant of the plane field $\xi_F$.
\end{corollary}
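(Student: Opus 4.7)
The plan is to read off $\la(\xi_F)$ from the family $\{\ADO_p(L,x)\}_{p\geq 2}$ directly from Theorem \ref{Thm: main fibred}, which pins down both the degree and the top coefficient of each $\ADO_p(L,x)$ in terms of $g(L)$, $s$ and $\la(\xi_F)$. Set $N := 2g(L)+s-1$ and $e := N - 2\la(\xi_F) \in \Z$, so that $\la(\xi_F) = (N-e)/2$ and it suffices to recover both $N$ and $e$ from the family.

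First I would recover $N$ from any single $\ADO_p$: the number of components $s$ is a link invariant, and the $x$-degree of $\ADO_p(L,x)$ equals $N(p-1)$ by Theorem \ref{Thm: main fibred}, so $N$ (and hence $g(L)$) can be extracted from, say, $\ADO_2(L,x)$, which is the Alexander polynomial. Next, Theorem \ref{Thm: main fibred} tells me that the top $x$-coefficient of $\ADO_p(L,x)$ equals $(-q)^e$ with $q = e^{\pi i/p}$. The element $-q$ is a root of unity of order $p$ if $p$ is odd and $2p$ if $p$ is even, so the top coefficient determines $e$ modulo that order. Since $e$ is a fixed integer attached to $L$, for any $p$ larger than $2|e|$ the unique representative of $e$ in the interval $(-p/2, p/2] \cap \Z$ satisfying $(-q)^e = \text{(given top coefficient)}$ equals $e$ itself. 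Letting $p$ grow through all integers $\geq 2$, the sequence of such symmetric residues stabilizes to $e$, and from $e$ and $N$ I then compute $\la(\xi_F)$.

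The only delicate point is that the root of unity $-q$ depends on $p$, so top coefficients across different $p$ cannot be compared naively; I would phrase everything in terms of the intrinsic integer exponent $e$, and extract $e$ from its residues modulo $p$ (or $2p$) as $p$ varies. Beyond this bookkeeping, the argument is an elementary consequence of Theorem \ref{Thm: main fibred} together with the fact that an integer is recovered from its residues modulo arbitrarily large integers, so there is essentially no further obstacle once the theorem is in hand.
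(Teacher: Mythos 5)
Your proof is correct and follows the same route as the paper's: read off $N=2g(L)+s-1$ from the $x$-degree, read off $e=N-2\la(\xi_F)$ from the top coefficient $(-q)^e$ modulo the order of $-q$, and let $p\to\infty$ to pin down the integer $e$. You are in fact slightly more careful than the paper, which asserts the top coefficient detects $e$ mod $2p$ for all $p$, whereas (as you note) $-q=e^{\pi i(p+1)/p}$ has order $p$ when $p$ is odd; this does not affect the conclusion since either modulus tends to infinity.
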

 
\begin{proof}
From the formula for the top coefficient of $\ADO_p(L)$ in Theorem \ref{Thm: main fibred}, it follows that $\ADO_p$ detects $2g(L)+s-1-2\la(\xi_F)$ mod $2p$. But it also detects $2g(L)+s-1$ as the top degree of the $x$-variable, hence it detects $2\la(\xi_F)$ mod $2p$. Since this is true for all $p\geq 2$, the corollary follows.
\end{proof}

\begin{remark}
    In Theorem \ref{Theorem: top coefficient of ADO after plumbing} it is necessary that $N_p(L_0,x)$ has maximal degree. For instance, the Conway knot $K_C$ can be obtained by plumbing 3 Hopf bands to a 4-component link $L_0$ (see the top of Figure 5 in \cite{Gabai:foliations-genera}) with $N_4(L_0,x)=0$, but $N_4(K_C,x)\neq 0$. The link $L_0$ is obtained from an unoriented Hopf link in which both components are doubled, and one of the parallel pairs is given a parallel orientation while the other has opposite orientation. We checked in Mathematica that $N_4(L_0)=0$, but it can be proved that $N_p(L_0)=0$ for all $p$ using computations from \cite{CGP:remarks-unrolled} (namely Theorem 5.2 and Lemma 6.6).
    %\textcolor{blue}{Can we make a list of non-fibred links with maximal degree ADO? Alternating, some iterated twisted annuli, what else?}
\end{remark}

\subsection{Special classes of fibred links}
\label{subs: corollaries}
A link in $S^3$ is {\em strongly quasi-positive} (SQP) if it is the closure of a braid that is a product of elements of the form $$(\s_j\s_{j+1}\dots\s_{i-1})\s_i(\s_j\s_{j+1}\dots\s_{i-1})^{-1}.$$ Equivalently (and more geometrically), a link is SQP if it is the boundary of a quasi-positive surface, that is, a subsurface $F$ of the fiber $F_T$ of a positive torus link with the property that $\pi_1(F)\to \pi_1(F_0)$ is injective \cite{Rudolph:quasipositive3-characterization}.

%\textcolor{blue}{Can we express $2g+s-1$ directly from the braid in an easy way? Here the fiber surface is the braid surface indeed.}

%\textcolor{blue}{Another class of fibered links: homogeneous \cite{Stallings:constructions-fibred}. Baader's theorem: positive if and only if SQP and homogeneous. Thus homogeneous non-positive must admit figure-eight fibers.}

\begin{corollary}
\label{corollary: positive braid and fibred SQ}
    Suppose $L$ is fibred and strongly quasi-positive. Then $N_p(L)$ has maximal degree and its top coefficient is $(-q)^{2g(L)+s-1}$. 
\end{corollary}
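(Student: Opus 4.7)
The plan is to deduce this directly from Theorem \ref{Thm: main fibred} by showing that the hypothesis of being fibred and strongly quasi-positive forces the Hopf invariant $\lambda(\xi_F)$ to vanish. The key input will be Rudolph's characterization \cite{Rudolph:quasipositive-plumbing} of fibred SQP links: a fibred link $L\sb S^3$ is strongly quasi-positive if and only if its fiber surface $F$ can be obtained from a disk by plumbing \emph{only positive Hopf bands} (no negative bands, no deplumbings).

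Given this, the first step is to apply Theorem \ref{Theorem: top coefficient of ADO after plumbing} with $L_0$ equal to the unknot (so $\ADO_p(L_0,x)=1$, which trivially has maximal degree) and with $n_-=m_+=m_-=0$, only $n_+$ positive Hopf plumbings occurring. The theorem then guarantees that $\ADO_p(L,x)$ has the maximal degree allowed by (\ref{eq: genus bound for ADO}) and that its top coefficient equals $(-q)^{n_+}$.

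The second step is to identify $n_+$ geometrically. From the additivity of the first Betti number under plumbing,
\begin{equation*}
2g(L)+s-1=\dim H_1(F;\R)=n_++n_--m_+-m_-=n_+,
\end{equation*}
as discussed at the end of Subsection \ref{subs: plane fields and enhanced Milnor}. Equivalently, one can observe that since $\lambda(\xi_{A^+})=0$ and only positive plumbings occur, $\lambda(\xi_F)=n_--m_-=0$, and then the exponent in Theorem \ref{Thm: main fibred} simplifies to $2g(L)+s-1-2\lambda(\xi_F)=2g(L)+s-1$.

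No genuine obstacle is expected; the only substantive ingredient outside of the machinery already developed in the paper is Rudolph's plumbing characterization of fibred SQP links, which is used as a black box. The bulk of the argument is simply a specialization of Theorem \ref{Thm: main fibred} to the case where the Giroux-Goodman decomposition involves no negative Hopf bands and no deplumbings.
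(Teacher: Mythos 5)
Your approach is the same as the paper's: reduce to Theorem \ref{Thm: main fibred} by showing $\lambda(\xi_F)=0$ for fibred SQP links. However, your stated black box is too strong and is, in fact, false: you assert that the fiber surface of a fibred SQP link is obtained from a disk by plumbing positive Hopf bands \emph{with no deplumbings}, and attribute this to \cite{Rudolph:quasipositive-plumbing}. The paper's own remark following this corollary explicitly notes that deplumbing \emph{is} necessary in the class of SQP knots (citing Etnyre--Ozba\u{g}c\i). What Rudolph actually proves in \cite{Rudolph:quasipositive-plumbing} is that a Murasugi sum is SQP if and only if both summands are SQP; combined with \cite{Rudolph:quasipositive3-characterization} (positive Hopf band is SQP, negative is not), this gives that the Giroux--Goodman decomposition of a fibred SQP fiber surface uses only \emph{positive} Hopf bands, plumbed or deplumbed --- i.e.\ $n_-=m_-=0$, but $m_+$ may well be nonzero.

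Your first route, which plugs $n_-=m_+=m_-=0$ into Theorem \ref{Theorem: top coefficient of ADO after plumbing} and then uses $n_+=\dim H_1(F)$, is therefore wrong as written. The conclusion nonetheless survives, because only $n_-=m_-=0$ is actually needed: then $\lambda(\xi_F)=n_--m_-=0$ and Theorem \ref{Thm: main fibred} gives the claim directly (this is essentially your ``Equivalently'' route, and is how the paper argues). Alternatively, in your first route one should write $2g(L)+s-1=n_+-m_+$ and observe that the exponent $n_+-n_--m_++m_-$ also collapses to $n_+-m_+$. So the gap is an overclaim in the cited lemma, not in the chain of implications; once you allow positive deplumbings, everything still goes through.
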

\begin{proof}
     Fibred strongly-quasipositive links are precisely the links that can be obtained by plumbing/deplumbing positive Hopf bands: this is because a plumbing of two surfaces is strongly quasi-positive if and only if each of the two surfaces is so \cite{Rudolph:quasipositive-plumbing}, and the positive Hopf band is quasi-positive while the negative Hopf band is not \cite{Rudolph:quasipositive3-characterization}. Hence, $n_-=m_-=0$ for a fibred SPQ link so that $\la(F)=0$ and the corollary follows from Theorem \ref{Thm: main fibred}.
\end{proof}

\begin{remark}
    It is interesting to note that, for fibred links, strong quasi-positivity is equivalent to say that the induced contact structure on $S^3$ (by the open book $(F,L)$) is the tight one \cite{Hedden:notions-positivity}. Also, deplumbing is necessary in the class of SQP knots (see e.g. \cite{Etnyre-Ozbagci:invariants-contact-from-open-books}).
\end{remark}

\medskip

Another class of links that are fibered are homogeneous braid closures, as shown in \cite{Stallings:constructions-fibred}. A braid $\b\in B_n$ is said to be {\em homogeneous} if it can be written as a product of the standard generators $\s_1,\dots,\s_{n-1}$ in a such a way that every generator appears and all appearances of a generator $\s_i$ have the same sign (in particular, positive braids are homogeneous). 

\begin{corollary}
    If $L$ is the closure of a homogeneous braid $\b\in B_n$, then $\ADO_p(L,x)$ has maximal degree and its top coefficient is $(-q)^{w(\b)-k_++k_-}$ where $w(\b)$ is the writhe of the braid and $k_{+}$ (resp. $k_-$) is the number of generators that appear with positive (resp. negative) sign.
\end{corollary}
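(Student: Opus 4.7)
The plan is to combine Stallings' result that homogeneous braid closures are fibered with Theorem \ref{Theorem: top coefficient of ADO after plumbing}, using the unknot as the base link $L_0$. Since $\ADO_p(\text{unknot},x)=1$ has maximal degree with top coefficient $1$, it suffices to exhibit the fiber surface $F_\b$ of $\hat\b$ as obtained from a disk by plumbing $n_+$ positive Hopf bands and $n_-$ negative Hopf bands, with no deplumbings (so $m_\pm=0$), where $n_+ - n_- = w(\b)-k_++k_-$. Theorem \ref{Theorem: top coefficient of ADO after plumbing} then immediately yields the claimed top coefficient $(-q)^{w(\b)-k_++k_-}$ and maximality of the degree.

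After reducing to the case in which every generator $\s_1,\dots,\s_{n-1}$ actually appears in $\b$ (otherwise $\hat\b$ is a split union of closures of smaller homogeneous braids, and one recurses on the pieces), I would build the decomposition by processing the braid word from left to right. Starting with $n$ disks, one per strand, each crossing $\s_i^{\pm 1}$ attaches a half-twisted band whose twist sign equals the crossing sign. The $n-1$ bands coming from the \emph{first} appearance of each generator connect previously disjoint components of the partial surface; once all of them are in place the result is connected, has Euler characteristic $n-(n-1)=1$ and genus $0$, hence is a disk. Each of the remaining $c_+-k_+$ positive and $c_--k_-$ negative bands then has both feet on the same boundary component of the current partial surface, so a small neighborhood of the new band together with an arc inside the surface joining its feet forms an unknotted annulus in $S^3$ with a single full twist of the matching sign. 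This exhibits the band attachment as a Hopf plumbing of the appropriate sign, and so $F_\b$ is produced from a disk by $n_+=c_+-k_+$ positive and $n_-=c_--k_-$ negative Hopf plumbings.

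Substituting into Theorem \ref{Theorem: top coefficient of ADO after plumbing} gives the exponent
$$n_+-n_--m_++m_- = (c_+-c_-)-(k_+-k_-) = w(\b)-k_++k_-,$$
which is the claimed formula.

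The main obstacle is justifying that each subsequent band really does realize a Hopf plumbing of the correct sign. One must verify, using homogeneity (which ensures that all copies of a given $\s_i$ produce twists of the same sign), that the arc joining the two feet of the new band can be chosen inside the current partial surface so that a regular neighborhood of the resulting loop is the desired twisted annulus, and that the sign conventions for $A^\pm$ from Section \ref{section: fibred links and plumbing} match the crossing sign. This is essentially a local picture near the last added band, but it requires careful bookkeeping; the counts and the final computation are then immediate.
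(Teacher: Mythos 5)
Your strategy is the paper's: exhibit the fiber surface of $\widehat{\b}$ as obtained from a disk by plumbing (no deplumbings) $N_+-k_+$ positive and $N_--k_-$ negative Hopf bands, and then apply Theorem~\ref{Theorem: top coefficient of ADO after plumbing} with $L_0$ the unknot; the arithmetic $n_+-n_-=w(\b)-k_++k_-$ is identical to what appears in the paper's proof. The difference is that the paper simply cites \cite{Stallings:constructions-fibred} for the plumbing decomposition, whereas you sketch a re-derivation from Seifert's algorithm. That sketch has a genuine gap exactly where you flag it: showing that each band beyond the initial $n-1$ realizes a Hopf plumbing of the correct sign is the nontrivial content of Stallings' theorem, not a routine local check.

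Two concrete problems with the sketch. First, processing the crossings ``from left to right'' is incompatible with first assembling the $n-1$ first-appearance bands into a disk, so you would have to justify re-ordering band attachments on an embedded surface in $S^3$, which is not automatic. Second, Stallings' argument does not add one Hopf band at a time; it realizes the Seifert surface of $\widehat{\b}$ as a generalized Murasugi sum (along $2k$-gons, not just squares) of the fiber surfaces of the $2$-strand torus links determined by the total exponent of each $\s_i$, each of which is in turn a same-sign plumbing of $|a_i|-1$ Hopf bands. That structured two-stage decomposition is what makes the embeddedness and sign conventions tractable, and it is not recovered by a straight band-by-band scan. With the citation to Stallings in place your count and conclusion match the paper's; without it, the proof is incomplete.
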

\begin{proof}
 Let $F$ be the surface obtained by applying Seifert's algorithm to the braid. Then, as shown in \cite{Stallings:constructions-fibred}, $F$ can be obtained by plumbing $n_+=N_+-k_+$ positive Hopf bands and $n_-=N_--k_-$ negative Hopf bands, where $N_+$ (resp. $N_-$) is the number of positive (resp. negative) appearances of generators in $\b$. Since no deplumbing is needed, we have $m_+=m_-=0$ and hence $$2g(L)+s-1-2\la(\xi_F)=n_+-n_--m_++m_-=N_+-N_--k_++k_-=w(\b)-k_++k_-$$ so the corollary follows from our theorem.
\end{proof}

\begin{remark}
    \label{remark: positive braid closures}
    Note that positive braid closures are both fibred SQP and homogeneous, so both corollaries above apply: for $\b\in B_n$ the top coefficient of $\ADO_p(\widehat{\b})$ is $(-q)^{w-n+1}=(-q)^{2g+s-1}$ where $w$ is the writhe. Positive knots are not necessarily fibred (e.g. $5_2$), but if they are, then they are SQP by \cite{Rudolp:positive-are-SQ}, hence the top coefficient of their $\ADO_p$'s are as in Corollary \ref{corollary: positive braid and fibred SQ} above.
\end{remark}

\medskip

%\begin{corollary}\label{corollary: mutant knots}   If two mutant knots are fibred, then they must have the same genus and Hopf invariant.\end{corollary}\begin{proof}The genus statement follows from the fact that both knots have the same Alexander polynomial, which detects the genus of fibred knots. Now, the two knots also have the same colored Jones polynomials and hence the same ADO invariants by \cite{Willets:unification}. Our theorem implies that $\ADO_p$ detects the Hopf invariant mod $2p$. Since this is for all $p\geq 2$, the corollary follows.\end{proof}

 \section{Computations}\label{section: computations}

 \subsection{Knots with $\leq 7$ crossings} 

Our theorems are illustrated in Table \ref{table: N4 of small knots} where we computed $\ADO_4$ for small knots. Theorem \ref{Thm: main fibred} implies that, for fibred $K$, the top power of $x$ in $\ADO_4(K,x)$ is $x^{3g(K)}$ with coefficient $i^{g(K)-\la(\xi_F)}$ (recall $\la(\xi_F)=n_--m_-$ where $n_-$ (resp. $m_-$) is the number of negative Hopf bands plumbed (resp. deplumbed)). For non-fibred knots, we can still use the explicit expression of the top coefficient in Theorem \ref{Theorem: top coefficient of ADO after plumbing} to guess how they are obtained by plumbing Hopf bands on smaller (non-fibred) knots. For instance: %\textcolor{blue}{For fibred knots, these remarks are covered by the corollary ``ADO detects Hopf invariant". But remarks for non-fibered knots are more interesting, where EMN does not exist!}
\begin{enumerate}
  \item The $\ADO_4$'s of the knots $5_1, 7_1$ have top coefficient $q^{2g(K)}=i^{g(K)}$. Indeed, they are obtained by plumbing 4 and 6 positive Hopf bands. They are positive braid closures, so Remark \ref{remark: positive braid closures} applies.  %More generally, any link that is a plumbing of positive Hopf bands (for instance, all positive braid closures by \textcolor{blue}{It's a theorem of Stallings, ref?} will have top coefficient $q^{2g(L)+s-1}$. \textcolor{blue}{More generally, knots that are obtained by plumbing/deplumbing positive Hopf bands can be characterized as fibred strongly-quasipositive. Equivalently, they induce the tight contact structure of $S^3$.}
    \item The knots $6_3,7_7$ are plumbings of two positive and two negative Hopf bands. 
    \item The knots $6_2,7_6$ are both obtained by plumbing a figure-eight and a positive trefoil.
    \item The $\ADO_4$'s of the non-fibred knots $5_2,6_1,7_3,7_5$ have top powers which differ by multiplication by $\pm i$. We could indeed check that $7_3,7_5$ are obtained from $5_2$ by plumbing a positive trefoil and that $6_1$ can be obtained by plumbing two negative Hopf bands to $5_2$ (giving a top coefficient of $-i(-2+2i)$) and then deplumbing a figure-eight (which does not change the top coefficient).
   \item $7_2$ and $7_4$ cannot be obtained by plumbing/deplumbing Hopf bands on any of the other knots in the table.
\end{enumerate}

\begin{center}
\begin{table}
\begin{tabular}{ c|c|c }
 \hline
 $K$ & Fibred & $\ADO_{4}(K,x)$ \ (positive part)  \\ 
 \hline
 $3_1$ & $Y$&  $(1 + 2 i)+(1 + i) x + i x^2 + i x^3$  \\
  \hline
 $4_1$ & $Y$&$7+6 x + 3 x^2 + x^3$  \\
  \hline
 $5_1$ & $Y$ & $1+(1 + i) x + i x^2 - (1 - i) x^3 - (1 - i) x^4 - x^5 - x^6$\\
  \hline
 $5_2$ &$N$& $(-5 + 10 i)-(4 - 8 i) x - (3 - 5 i) x^2 - (2 - 2 i) x^3$     \\
  \hline
$6_1$&$N$&   $(7 + 10 i)+(4 + 8 i) x + (3 + 5 i) x^2 + (2 + 2 i) x^3$    \\
 \hline
$6_2$ &$Y$&  $(19 + 2 i)+(17 + 3 i) x + (11 + 6 i) x^2 + (6 + 8 i) x^3 $\\
& & $+ (2 + 6 i) x^4 + 
 3 i x^5 + i x^6$\\
  \hline
  $6_3$ & $Y$ & $41+36 x + 25 x^2 + 14 x^3 + 6 x^4 + 3 x^5 + x^6$ \\
  \hline
 $7_1$ &$Y$&  $(1-2i)-2 i x - i x^2 - x^4 - x^5 - (1 + i) x^6 - (1 + i) x^7 - i x^8 - i x^9$ \\
  \hline
 $7_2$ &$N$&  $(-5 + 4 i)-(4 - 2 i) x - (3 - 2 i) x^2 - (1 - 2 i) x^3$  \\ 
  \hline
 $7_3$ &$N$&  $(-5 + 16 i)-(7 - 15 i) x - (10 - 9 i) x^2 - (11 - 2 i) x^3$ \\
 
& & $- (9 + i) x^4 - (5 + 
    3 i) x^5 - (2 + 2 i) x^6$\\
     \hline
    $7_4$ &$N$&  $-35-28 x - 18 x^2 - 8 x^3$ \\
     \hline
     $7_5$ &$N$&  $(-37+34i)-(36 - 30 i) x - (34 - 18 i) x^2 - (28 - 6 i) x^3$ \\ 
   & &  $ - (17 + 
    i) x^4 - (8 + 4 i) x^5 - (2 + 2 i) x^6$ \\
     \hline
      $7_6$ &$Y$&  $(59 + 54 i)+(50 + 50 i) x + (32 + 41 i) x^2 + (15 + 29 i) x^3$ \\
     & & $+ (3 + 15 i) x^4 + 
 5 i x^5 + i x^6$ \\
     \hline
$7_7$ & $Y$& $(101-64i)+(91 - 55 i) x + (64 - 35 i) x^2 + (35 - 15 i) x^3$ \\ 
& & $+ (15 - 3 i) x^4 + 
 5 x^5 + x^6$\\ 
 \hline
\end{tabular}
\label{table: N4 of small knots}
\bigskip

\caption{Since $\ADO_p(K,x)$ is symmetric under $x\mapsto x^{-1}$, we only depict the nonnegative powers of $x$. }
\end{table}
\end{center}

Note that the Alexander polynomial does not give much information on the plumbing structure of knots. For instance, a fibred knot that is a plumbing of positive trefoils has $\ADO_2(K,x)$ with top coefficient $(-1)^g$. Both $7_1$ and $7_7$ satisfy the latter condition, but as we discussed above $7_7$ is not a trefoil plumbing while $7_1$ is.
%\textcolor{blue}{Trefoil plumbing or just positive Hopf band plumbing? Also, some of the above (e.g. (5)) can be deduced only with Alexander.}

\subsection{12-crossing knots} According to KnotInfo the ``false positives" of $\leq 12$ crossings where Alexander is monic and of maximal degree but that are not fibered are:
    \[12n_{57, 210, 214, 258, 279, 382, 394, 464, 483, 535, 650, 801, 815}\]
  In Table \ref{table: 12-crossing knots where Alex fails} we list the top term of $\ADO_p(K,x)$ of those knots at $p=3$ and $p=4$. Based on this, we note:
\begin{enumerate}
    \item \textbf{On $\ADO_3$}: The knots 12n57, 12n258, 12n279, 12n464, 12n483, 12n650, 12n815 have $\Z[q]$-monic $\ADO_3$. These knots also have $\Z[q]$-monic Links-Gould invariant (see Prop. 3.8 of \cite{Kohli:LG}). The former fact is indeed a consequence of the latter by \cite{Takenov:ADO3}, where it is shown that $\ADO_3$ is a specialization of Links-Gould for some knots (including those in the table).
    \item \textbf{On $\ADO_4$}: The non-fibredness of all the above knots, except $12n57$, is detected with $\ADO_4$. Note also that the knots 12n258, 12n394, 12n483, 12n535, 12n815 have top coefficients differing by a power of $i$. Similarly for the knots 12n279, 12n382, 12n464, 12n650. Though not appearing in the above table, all the $N_3$'s and $N_4$'s of the above knots are different except for the pair $12n210, 12n214$ which have the same $\ADO_p$ for all $p$ (since they are mutant).
\end{enumerate}

\begin{center}
\begin{table}
\begin{tabular}{ |c|c|c|c|c }
\hline
 $K$ & $g(K)$ & $\text{ top term of }\ADO_{3}(K,x)$ & $\text{ top term of }\ADO_{4}(K,x)$  \\ 
 \hline
 $12n57$ & 2 & $x^4$ & $x^6$\\
 \hline
  $12 n210$ & 3& $-2x^6$&   $- (1 + 4 i) x^9$\\
  \hline
  $12n214$ & 3& $-2x^6$ & $-(1+4i)x^9$\\
  \hline
      $12n258$ & 2& $q^4 x^4$ &   $(- 2 + i) x^6$ \\
      \hline
$12n279$ & 2   & $q^4 x^4$ &  $(1 - 2 i) x^6$\\
\hline
$12n382$ &2   &$2q x^4$ &  $(2 + i) x^6$\\
\hline
$12n394$ &2&  $2q x^4$& $(1 + 2 i) x^6$\\
\hline
 $12n464$ &2& $x^4$&  $(2 + i) x^6$\\
 \hline
$ 12n483$ &2   & $q^4 x^4$ & $(-2 + i) x^6$\\
 \hline
$12n535$ &2& $-2q^2 x^4$ & $(2 - i) x^6$\\
\hline
$12n650$ &2  & $q^4 x^4$& $(1-2i)x^6$\\
\hline
$12n801$ &2& $(-2+i\sqrt{3}) x^4$ &  $3 i x^6$\\
\hline
$12n815$ &2  & $q^4 x^4$ &    $(- 2 + i) x^6$\\
\hline
\end{tabular}
\label{table: 12-crossing knots where Alex fails}

\bigskip

\caption{Top terms of $\ADO_3$ and $\ADO_4$ of the 12-crossing knots whose non-fibredness is not detected by the Alexander polynomial.}
\end{table}
\end{center}

\bibliographystyle{amsplain}
\bibliography{referencesabr.bib}

\end{document}